\documentclass[11pt,reqno]{amsart}

\usepackage{amsmath,amssymb,amsthm}
\usepackage{tikz}
\usetikzlibrary{decorations.pathmorphing}
\usepackage{graphicx, subcaption}
\usepackage{hyperref}
\usepackage[margin=2.8cm]{geometry}
\usepackage[dvipsnames]{xcolor}

\tikzset{snake it/.style={decorate, decoration=snake}}

\usepackage{genyoungtabtikz}
\YFrench
\Yboxdim{0.5cm}

\newcommand{\Ycyan}{\Yfillcolour{cyan!25}}
\newcommand{\Ywhite}{\Yfillcolour{white}}

\newtheorem{thm}{Theorem}[section]
\newtheorem{lem}[thm]{Lemma}

\newtheorem{rem}[thm]{Remark}

\newcommand{\la}{\lambda}
\newcommand{\x}{\mathbf{x}}
\newcommand{\y}{\mathbf{y}}
\newcommand{\A}{\mathcal{A}}

\DeclareMathOperator{\SSYT}{SSYT}

\DeclareMathOperator{\TSPP}{TSPP}
\DeclareMathOperator{\TSPPT}{TSPPT}

\DeclareMathOperator{\sgn}{sgn}
\DeclareMathOperator{\diag}{diag}

\newcommand{\deff}[1]{\emph{\color{cyan!80!blue} #1}} 

\newcommand{\mb}[1]{\mathbf{#1}}
\newcommand{\mc}[1]{\mathcal{#1}}
\newcommand{\wh}[1]{\widehat{#1}}
\newcommand{\wt}[1]{\widetilde{#1}}
\newcommand{\ov}[1]{\overline{#1}}

\newcommand\Square[1]{+(-#1,-#1) rectangle +(#1,#1)}

\newcommand{\RightBoundaryColour}[4]{
\begin{scope}[xshift= 1cm*cos(30)*(#1+#2) , yshift=1cm*#3+1cm*sin(30)*(#2-#1)]
\draw [fill=#4] (0,0) -- ({1*cos(30)},{1*sin(30)}) -- ({1*cos(30)},{-1*sin(30)}) -- (0,-1) -- (0,0);
\end{scope}
}
\newcommand{\LeftBoundaryColour}[4]{
\begin{scope}[xshift= 1cm*cos(30)*(#1+#2) , yshift=1cm*#3+1cm*sin(30)*(#2-#1)]
\draw (0,0)[fill=#4] -- ({-1*cos(30)},{1*sin(30)}) -- ({-1*cos(30)},{-1*sin(30)}) -- (0,-1) -- (0,0);
\end{scope}
}
\newcommand{\TopBoundaryColour}[4]{
\begin{scope}[xshift= 1cm*cos(30)*(#1+#2) , yshift=1cm*#3+1cm*sin(30)*(#2-#1)]
\draw (0,0)[fill=#4] -- ({1*cos(30)},{1*sin(30)}) -- (0,1) -- ({-1*cos(30)},{1*sin(30)}) -- (0,0);
\end{scope}
}

\newcommand{\RightBoundary}[4]{
\ifnum #4 =1
	\RightBoundaryColour{#1}{#2}{#3}{green!80!black}
\else
	\ifnum #4 =2
		\RightBoundaryColour{#1}{#2}{#3}{darkgray}
	\else
		\ifnum #4=3
			\RightBoundaryColour{#1}{#2}{#3}{white}	
		\else
			\ifnum #4=4
				\RightBoundaryMatching{#1}{#2}{#3}	
			\else
				\ifnum #4=5
					\RightBoundaryTriangle{#1}{#2}{#3}
				\else
					\ifnum #4=6
						\RightBoundaryTriangleDotted{#1}{#2}{#3}
					\else
						\ifnum #4=7
							\RightBoundaryColour{#1}{#2}{#3}{white}
							\begin{scope}[xshift= 1cm*cos(30)*(#1+#2) , yshift=1cm*#3+1cm*sin(30)*(#2-#1)]
								\draw[cyan, line width=2pt]  ({1/2*cos(30)},{1/2*sin(30)}) -- ({1/2*cos(30)},{-1+1/2*sin(30)});
							\end{scope}
						\else
							\ifnum #4=8
								\RightBoundaryColour{#1}{#2}{#3}{white}
							\else
								\ifnum #4=9
									\RightBoundaryColour{#1}{#2}{#3}{white}
									\begin{scope}[xshift= 1cm*cos(30)*(#1+#2) , yshift=1cm*#3+1cm*sin(30)*(#2-#1)]
										\draw[red, dashed, line width=2pt]  ({1/2*cos(30)},{1/2*sin(30)}) -- ({1/2*cos(30)},{-1+1/2*sin(30)});
									\end{scope}
								\else
									\RightBoundaryColour{#1}{#2}{#3}{white}
								\fi
							\fi
						\fi
					\fi
				\fi
			\fi
		\fi
	\fi
\fi
}
\newcommand{\LeftBoundary}[4]{
\ifnum #4 =1
	\LeftBoundaryColour{#1}{#2}{#3}{blue}
\else
	\ifnum #4 =2
		\LeftBoundaryColour{#1}{#2}{#3}{lightgray}
	\else
		\ifnum #4=3
			\LeftBoundaryColour{#1}{#2}{#3}{white}	
		\else
			\ifnum #4=4
				\LeftBoundaryMatching{#1}{#2}{#3}	
			\else
				\ifnum #4=5
					\LeftBoundaryTriangle{#1}{#2}{#3}
				\else
					\ifnum #4=6
						\LeftBoundaryTriangleDotted{#1}{#2}{#3}
					\else 
						\ifnum #4=7
							\LeftBoundaryColour{#1}{#2}{#3}{white}	
						\else
							\ifnum #4=8
								\LeftBoundaryColour{#1}{#2}{#3}{white}	
								\begin{scope}[xshift= 1cm*cos(30)*(#1+#2) , yshift=1cm*#3+1cm*sin(30)*(#2-#1)]
									\draw[cyan, line width=2pt]  ({-1/2*cos(30)},{1/2*sin(30)}) -- ({-1/2*cos(30)},{-1+1/2*sin(30)});
								\end{scope}
							\else
								\ifnum #4=9
									\LeftBoundaryColour{#1}{#2}{#3}{white}
								\else
									\LeftBoundaryColour{#1}{#2}{#3}{white}	
									\begin{scope}[xshift= 1cm*cos(30)*(#1+#2) , yshift=1cm*#3+1cm*sin(30)*(#2-#1)]
										\draw[red, dashed, line width=2pt]  ({-1/2*cos(30)},{1/2*sin(30)}) -- ({-1/2*cos(30)},{-1+1/2*sin(30)});
									\end{scope}
								\fi
							\fi
						\fi
					\fi
				\fi
			\fi
		\fi
	\fi
\fi
}
\newcommand{\TopBoundary}[4]{
\ifnum #4 =1
	\TopBoundaryColour{#1}{#2}{#3}{red}
\else
	\ifnum #4 =2
		\TopBoundaryColour{#1}{#2}{#3}{white}
	\else
		\ifnum #4=3
			\TopBoundaryColour{#1}{#2}{#3}{white}	
		\else
			\ifnum #4=4
				\TopBoundaryMatching{#1}{#2}{#3}	
			\else
				\ifnum #4=5
					\TopBoundaryTriangle{#1}{#2}{#3}
				\else
					\ifnum #4=6
						\TopBoundaryTriangleDotted{#1}{#2}{#3}
					\else
						\ifnum #4=7
							\TopBoundaryColour{#1}{#2}{#3}{white}
							\begin{scope}[xshift= 1cm*cos(30)*(#1+#2) , yshift=1cm*#3+1cm*sin(30)*(#2-#1)]
								\draw[cyan, line width=2pt]  ({-1/2*cos(30)},{1-1/2*sin(30)}) -- ({1/2*cos(30)},{1/2*sin(30)});
							\end{scope}
						\else
							\ifnum #4=8
								\TopBoundaryColour{#1}{#2}{#3}{white}
								\begin{scope}[xshift= 1cm*cos(30)*(#1+#2) , yshift=1cm*#3+1cm*sin(30)*(#2-#1)]
									\draw[cyan, line width=2pt]  ({-1/2*cos(30)},{1/2*sin(30)}) -- ({1/2*cos(30)},{1-1/2*sin(30)});
								\end{scope}
							\else
								\ifnum #4=9
									\TopBoundaryColour{#1}{#2}{#3}{white}
									\begin{scope}[xshift= 1cm*cos(30)*(#1+#2) , yshift=1cm*#3+1cm*sin(30)*(#2-#1)]
										\draw[red, dashed, line width=2pt]  ({-1/2*cos(30)},{1-1/2*sin(30)}) -- ({1/2*cos(30)},{1/2*sin(30)});
									\end{scope}
								\else
									\TopBoundaryColour{#1}{#2}{#3}{white}
									\begin{scope}[xshift= 1cm*cos(30)*(#1+#2) , yshift=1cm*#3+1cm*sin(30)*(#2-#1)]
										\draw[red, dashed, line width=2pt]  ({-1/2*cos(30)},{1/2*sin(30)}) -- ({1/2*cos(30)},{1-1/2*sin(30)});
									\end{scope}
								\fi
							\fi
						\fi
					\fi
				\fi
			\fi
		\fi
	\fi
\fi
}

	\newcounter{x}
	\newcounter{y}
	\newcounter{z}
	\newcounter{help}

\newcommand{\PlanePartitionColour}[2]{
	\foreach \m [count=\y] in {#1}{		 								
		\foreach \z [count=\x] in \m{									
			\setcounter{x}{\x}											
			\ifnum \z > 0												
				\TopBoundary{\x}{-\y}{\z}{#2}								
			\fi
			
			\ifnum \x > 1												
				\ifnum \z < \thez										
					\setcounter{help}{\z}
					\addtocounter{help}{1}								
					\foreach \zz in {\thehelp,...,\thez}{					
						\RightBoundary{(\x -1)}{(-\y)}{\zz}{#2}				
					}														
				\fi
			\fi
			\ifnum \y > 1											
				\foreach \mm [count=\yy] in {#1}{					
				\ifnum \yy = \they									
				\foreach \zz [count=\xx] in \mm{					
				\ifnum \xx = \x										
				\ifnum \zz > \z										
					\setcounter{help}{\zz}									
					\addtocounter{help}{-1}	
					\foreach \zzz in {\z,...,\thehelp}{
						\LeftBoundary{(\x -1)}{(1-\yy)}{\zzz}{#2}		
					}
				\fi
				\fi
				}
				\fi
				}
			\fi		
			\setcounter{z}{\z}										
		}										
		\ifnum \thez >0												
			\foreach \z in {1,...,\thez}{
				\RightBoundary{\thex}{-\y}{\z}{#2}						
			}
		\fi
		\setcounter{y}{\y}
	}											

	\foreach \mm [count=\yy] in {#1}{								
	\ifnum \yy = \they
	\foreach \zz [count=\xx] in \mm{
	\ifnum \zz >0
		\foreach \zzz in {1,...,\zz}{
			\LeftBoundary{(\xx -1)}{(1-\yy)}{(\zzz-1)}{#2}			
		}
	\fi
	}
	\fi
	}
	
}

\newcommand{\PlanePartition}[1]{\PlanePartitionColour{#1}{1}}
\newcommand{\PlanePartitionWhite}[1]{\PlanePartitionColour{#1}{3}}

\newcommand{\TilingBox}[4]{
	\foreach \a in {1,...,#2}{
		\foreach \b in {1,...,#3}{
			\RightBoundary{0}{-\a}{\b -1}{#4}
		}
	}
	\foreach \a in {1,...,#1}{
		\foreach \b in {1,...,#3}{
			\LeftBoundary{\a}{0}{\b}{#4}
		}
	}
	\foreach \a in {1,...,#1}{
		\foreach \b in {1,...,#2}{
			\TopBoundary{\a}{-\b}{0}{#4}
		}
	}
}

\title[Determinantal formulae for a symmetric generating function of TSPPs]{Determinantal formulae for a symmetric generating function of totally symmetric plane partitions}

\author{Julia H\"ormayer and Florian Schreier-Aigner}

\begin{document}

\begin{abstract}
Ilse Fischer and the second author introduced in [Algebr. Comb. 7 (2024), no. 5, 1319–1345] a two parameter family of polynomials defined as sums over totally symmetric plane partitions and connected to alternating sign matrices and cyclically symmetric lozenge tilings of a hexagon with a triangular hole.
In this paper we present several determinantal formulae leading to new lattice path models and a novel family of tableaux. The later illustrates that the polynomials of our interest can be thought of as generalisations of the three dual Littlewood identities.
\end{abstract}
\maketitle

\section{Introduction}

Initiated by MacMahon \cite{MacMahon97} at the end of the 19th century, the study of \deff{plane partitions} became popular within the combinatorial community starting in the second half of the last century. In particular, proving the enumeration formulae for the ten symmetry classes of plane partitions became an important research programme which was popularised by Stanley \cite{Stanley86a, Stanley86} and only finished in 2011 by  Koutschan, Kauers, and Zeilberger \cite{KoutschanKauersZeilberger11}. First noted by Bender and Knuth \cite{BenderKnuth72}, the symmetry classes of plane partitions (at least those without cyclical symmetry) were observed to be connected to symmetric polynomials, allowing simple proofs for their enumeration formulae. For more details compare to the overview article by Krattenthaler \cite{Krattenthaler16}.
The symmetry classes of plane partitions involving a cyclical symmetry, on the other hand, turned out to be connected to \deff{alternating sign matrices} (ASMs) introduced by Robbins and Rumsey \cite{RobbinsRumsey86}. Similarly to plane partitions, the early focus was on the enumeration of their symmetry classes; again a very challenging task which was completed only in 2017 by Behrend, Fischer and Konvalinka \cite{BehrendFischerKonvalinka17}. Over the years more numerical connections, i.e., equidistributions, between these two seemingly unrelated combinatorial objects have been found, compare for example to \cite{BehrendDiFrancescoZinnJustin13, FonsecaZinnJustin08, Krattenthaler96}. Nevertheless, their relation is still an unsolved mystery.\medskip

In a recent series of papers by Ilse Fischer and the second author \cite{FischerSchreierAigner23, AignerFischer24}, they introduced a novel weight for \deff{monotone triangles}, combinatorial objects generalising ASMs and in bijection with
semistandard Young tableaux. The corresponding generating functions, nowadays called the \deff{(modified) Robbins polynomials}, are symmetric polynomials generalising Schur polynomials. In the special case corresponding to ASMs, the modified Robbins polynomials reveal a novel and unexpected connection between ASMs and plane partitions: the modified Robbins polynomial can be written in this case as a sum over \deff{totally symmetric plane partitions} (TSPPs) where each TSPP contributes one Schur polynomial. Denote by $\A_{n,k}$ this very generating function of TSPPs, which we call the \deff{symmetric generating function for TSPPs}, where $n$ denotes the size of the involved TSPPs and $k$ is an additional parameter deforming the indexing partition of each of the Schur polynomials in a natural way; the precise definitions are given in Section~\ref{sec:sym gen fct}. As shown in \cite[Theorem 1.2]{AignerFischer24}, the symmetric functions $\A_{n,k}$ yield another intriguing connection to plane partitions, as they enumerate the cyclically symmetric lozenge tilings of cored hexagons with side lengths $n+2k,n,n+2k,n,n+2k,n$ which generalise cyclically symmetric plane partitions.
\medskip

In this paper, we study the symmetric generating function $\A_{n,k}$ for TSPPs from an algebraic combinatorial point of view and derive variants of the Jacobi--Trudi formula, the Nägelsbach--Kostka identity, and the Giambelli identity as presented in our main theorem.

\begin{thm}
\label{thm:main}
The symmetric function $\A_{n+1,k}(\x;r,u,v,w) $ satisfies the following determinantal formulae
\begin{align}
\label{eq:Giambelli 1}
&\det_{1 \leq i,j \leq n} \left(
(-1)^{j-i}v^j\binom{i-1}{j-1}+ r u^i w^{j-i}s_{(i-1|j+k-1)}(\x)
 \right) \\
\label{eq:dual JT} =& \det_{1 \leq i,j \leq n} \left(
 r u^i e_{(n+k+i-j)}(\x) +(-1)^{i-1}v^{i}\sum_{m}\binom{m-1}{m-i}e_{(n+1-j-m)}(\x) \left(\frac{w}{u}\right)^{m-i}
 \right)\\
\label{eq:JT} =& \det_{1 \leq i,j \leq n+k}\left(
\begin{matrix}
   v^{n-i+1} h_{i-j}(\x) +(-1)^{k+n-i}ru^{n-i+1}\sum\limits_{m=0}^{n-i}\binom{n-i}{m-i} h_{k+1+2n-m-j}(\x)  \left(\frac{w}{u}\right)^{m-i} \: 1\leq i \leq n \\
  \qquad\qquad\qquad\qquad\qquad  h_{i-j}(\x) \qquad\qquad\qquad\qquad\qquad\qquad\quad n+1\leq i \leq n+k
\end{matrix}
\right) \\
\label{eq:Giambelli 2} =&  \det_{1 \leq i,j \leq n} \left(
 \delta_{i,j}v^{i}+r u^{j} \sum_{m} \binom{i-1}{i-m}
\left(\frac{w}{u}\right)^{i-m} s_{(m-1|j+k-1)}(\x)
 \right),
\end{align}
\end{thm}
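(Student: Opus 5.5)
We first need a working description of $\A_{n+1,k}$. We take it in the form of Section~\ref{sec:sym gen fct}: a sum over TSPPs, equivalently over their Frobenius-type data, with each term weighted by monomials in $r,u,v,w$ and one Schur function whose shape, written in Frobenius notation $(\alpha_1,\dots,\alpha_\ell\mid\beta_1+k,\dots,\beta_\ell+k)$, is read off from the TSPP. The plan is to prove the four identities in a chain. We start with \eqref{eq:Giambelli 1}, then pass to \eqref{eq:Giambelli 2} by a unitriangular change of basis. Finally we obtain \eqref{eq:JT} and \eqref{eq:dual JT} from the Giambelli form via the classical Jacobi--Trudi and Nägelsbach--Kostka identities, each lifted to block determinants.

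\textbf{Step 1: \eqref{eq:Giambelli 1} equals $\A_{n+1,k}$.}
Write the matrix as $D+R$, where $D=((-1)^{j-i}v^j\binom{i-1}{j-1})$ is lower unitriangular up to the diagonal factor $v^j$, and $R=(r u^i w^{j-i}s_{(i-1|j+k-1)})$. Expand $\det(D+R)$ as a sum over subsets $I,J$ with $|I|=|J|$ of $\pm\det R_{I,J}\cdot\det D_{\bar I,\bar J}$.
\begin{itemize}
\item By the classical Giambelli identity, $\det R_{I,J}$ equals $r^{|I|}u^{\sum I}w^{\sum J-\sum I}$ times the Schur function $s_{(I-1\mid J+k-1)}$.
\item The minors of $D$ are minors of a Pascal matrix. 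By Lindström--Gessel--Viennot they count nonintersecting lattice paths; equivalently, they are binomial determinants.
\end{itemize}
The key combinatorial input is a bijection, or a known characterisation, showing that the nonzero complementary Pascal minors correspond exactly to the TSPPs with given diagonal data (the Frobenius coordinates $I,J$). For this we would use the standard encoding of TSPPs as families of nonintersecting paths (Stembridge / Andrews' proof of the TSPP count), matched against the definition of $\A_{n,k}$. We expect this to be the main obstacle: getting the signs and the exact $r,u,v,w$ weights to agree with the paper's definition.

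\textbf{Step 2: \eqref{eq:Giambelli 1} equals \eqref{eq:Giambelli 2}.}
Multiply the matrix of \eqref{eq:Giambelli 1} on the left by a suitable lower-triangular binomial matrix $P$ with $\det P=1$, and on the right by a diagonal matrix, so that
\[
D \longmapsto \diag(v^i), \qquad R\longmapsto\Bigl(\sum_m\binom{i-1}{i-m}(w/u)^{i-m}ru^j s_{(m-1|j+k-1)}\Bigr).
\]
This reduces to the Pascal-matrix inverse identity $\sum_j (-1)^{j-m}\binom{i-1}{j-1}\binom{j-1}{m-1}=\delta_{im}$, together with some bookkeeping of the powers of $u$ and $w$.

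\textbf{Step 3: \eqref{eq:JT} and \eqref{eq:dual JT}.}
Write $s_{(a|b)}=\sum_{t\ge0}(-1)^t h_{b+1+t}e_{a-t}$, i.e. the hook Schur function as a single product-like sum. Then the hook-Schur matrix of \eqref{eq:Giambelli 2} factors as $(\text{matrix in }e)\cdot(\text{matrix in }h)$. Augmenting to an $(n+k)\times(n+k)$ block matrix with the unitriangular Toeplitz block $(h_{i-j})$, a Schur-complement / block-determinant argument gives \eqref{eq:JT}; the binomial sums come from transporting the factor $P$ of Step 2. Using $(h_{i-j})^{-1}=((-1)^{i-j}e_{i-j})$, the dual computation (Jacobi's complementary minor theorem) turns this into the $n\times n$ determinant \eqref{eq:dual JT} in the $e$'s. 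Each step here is a matrix identity whose verification is a routine binomial and generating-function computation.
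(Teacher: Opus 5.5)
\textbf{Step 2.} This is the paper's own argument (Section~\ref{sec:giambellis}, written there as $A=C_1C_2BC_3$). One correction to the bookkeeping: a single right diagonal factor does not suffice. The matrix of \eqref{eq:Giambelli 2} is $\diag\bigl((w/u)^i\bigr)\,\bigl(\binom{i-1}{j-1}\bigr)_{i,j}\,G\,\diag\bigl((u/w)^j\bigr)$, where $G$ is the matrix of \eqref{eq:Giambelli 1}. Only the two diagonal factors together have determinant $1$.

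\textbf{Step 1 is the real gap.} The paper does not prove that \eqref{eq:Giambelli 1} equals $\A_{n+1,k}$; it quotes \cite[Lemma 6.2]{AignerFischer24}. Your sketch does not prove it either, because its only nontrivial input is left open. By linear independence of the Schur functions $s_{(I-1|J+k-1)}$, the claim is equivalent to the enumerative identity
\[
\det\Bigl(\binom{i-1}{j-1}\Bigr)_{i\in\bar I,\,j\in\bar J}=\#\bigl\{T\in\TSPP_n:\ \diag(T)=(I-1\mid J-1)\bigr\}\qquad\text{for all } I,J\subseteq[n],\ |I|=|J|.
\]
This is a count, not a matching of supports: the minors are not $0/1$. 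For example, with $n=3$, $\bar I=\{3\}$, $\bar J=\{2\}$ the minor is $\binom{2}{1}=2$, and there are exactly two TSPPs in the $3$-cube with diagonal entries $(3,2,0)$.

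The part you flag as the obstacle is in fact automatic. Since $(-1)^{j-i}=(-1)^i(-1)^j$, the complementary minor of $D$ carries the sign $(-1)^{\sum\bar I+\sum\bar J}=(-1)^{\sum I+\sum J}$, which cancels the Laplace sign. Also, $r^{|I|}u^{\sum I}w^{\sum J-\sum I}v^{\binom{n+1}{2}-\sum J}$ is exactly $\omega(T)$ for $I=\{a_i+1\}$, $J=\{b_i+1\}$. So either cite the lemma, as the paper does, or supply a bijection between TSPPs with prescribed diagonal and the nonintersecting path families counted by that minor.

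\textbf{Step 3 is a genuinely different route.} The paper gets \eqref{eq:dual JT}, \eqref{eq:JT} and \eqref{eq:Giambelli 2} combinatorially: a Lindstr\"om--Gessel--Viennot variant with a choice of starting point for each e-path, then full and partial dualisation to h-paths and mixed paths, plus jeu de taquin. It uses algebra only for \eqref{eq:Giambelli 1}$=$\eqref{eq:Giambelli 2}. Your algebraic route works once the hook expansion is fixed. In the paper's convention $s_{(0|k)}=e_{k+1}$ (for $n=1$ all four determinants equal $v+ru\,e_{k+1}$), so
\[
s_{(a|b)}=\sum_{t\ge0}(-1)^th_{a+1+t}e_{b-t}=\sum_{q=0}^{a}(-1)^{a-q}h_qe_{a+b+1-q}.
\]
Your formula has $h$ and $e$ interchanged and would produce the $\omega$-images of \eqref{eq:JT} and \eqref{eq:dual JT}. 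The reductions are also easiest run downwards.
\begin{enumerate}
\item[(i)] For \eqref{eq:JT}, take the inner sum over $i\le m\le n$, as the path count gives. Write the matrix as $\bigl(\Delta+F\,((-1)^{i-j}e_{i-j})\bigr)(h_{i-j})$, with $\Delta=\diag(v^n,\dots,v,1,\dots,1)$ and $F$ supported on the first $n$ rows. The last $k$ rows become $(0\;I_k)$. Since $k+1+2n-m>n+k$, the truncated sums $\sum_{l\le n+k}h_{k+1+2n-m-l}(-1)^{l-j}e_{l-j}$ collapse to $(-1)^{n+k-j}s_{(n-m|n+k-j)}$. After $i\mapsto n+1-i$ and conjugation by $\diag((-1)^i)$ and $\diag(u^i)$, this is \eqref{eq:Giambelli 2}.
\item[(ii)] For \eqref{eq:dual JT}, right-multiply by $\mc{E}^{-1}$, where $\mc{E}=(e_{n+1-j-m})_{m,j}$ has determinant $(-1)^{\binom{n}{2}}$. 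The $e$-part becomes $\bigl((-1)^{j-1}ru^is_{(j-1|i+k-1)}\bigr)$ and the $v$-part becomes $\bigl((-1)^{i-1}v^i\binom{j-1}{i-1}(w/u)^{j-i}\bigr)$. After the column signs $(-1)^{j-1}$, whose determinant cancels that of $\mc{E}$, and conjugation by $\diag((u/w)^i)$, this is exactly $G^{T}$. So \eqref{eq:dual JT} links directly to \eqref{eq:Giambelli 1}, and no complementary-minor theorem is needed.
\end{enumerate}

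\textbf{What each route buys.} The algebraic route is shorter and checkable line by line. The paper's route is what produces the e-, h- and mixed-path models on which the TSPP tableaux of Section~\ref{sec:TSPP tab} are built.
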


In particular we provide lattice path interpretations for the last three determinants using the Lindström--Gessel-Viennot lemma \cite{GesselViennot85, Lindstroem73} and use the method of \emph{dualisation} due to Gessel and Viennot \cite{GesselViennot85} for a combinatorial argument showing that these determinants are equal. Moreover we present a tableaux theoretic interpretation for these determinants which we call the \deff{TSPP tableaux}. Using these TSPP tableaux it is immediate that the symmetric functions $\A_{n,k}$ are connected to the three dual Littlewood identities, as shown in \eqref{eq:n,1 case}, \eqref{eq:n,0 case} and \eqref{eq:n,-1 case}. We believe that these new tableaux will be a useful tool for working with $\A_{n,k}$.\medskip

We start by introducing all necessary symmetric polynomial terminology in Section~\ref{sec:preliminaries}. In Section~\ref{sec:sym gen fct}, we define the symmetric generating function $\A_{n,k}$. Section~\ref{sec:proof} contains the proof of our main theorem. Thereafter, in Section~\ref{sec:TSPP tab}, we introduce TSPP tableaux and show that they yield another combinatorial interpretation for $\A_{n,k}$.

\section{Preliminaries}
\label{sec:preliminaries}

A \deff{partition} $\la$ is a weakly decreasing sequence of positive integers $\la=(\la_1,\ldots,\la_\ell)$. We say that $\la$ is a partition of $|\la|:=\la_1+\cdots+\la_\ell$, denoted by $\la \vdash |\la|$, and call $\ell$ its \deff{length}, also denoted by $\ell(\la)$. 
 The \deff{Young diagram} of a partition $\la$ is a left-aligned collection of boxes such that the $i$-th row from the bottom has $\la_i$ boxes (French convention). For simplicity, we identify partitions with their corresponding Young diagrams. The \deff{conjugate} $\la^\prime=(\la_1^\prime,\la_2^\prime,\ldots)$ of $\la$ is obtained by reflecting $\la$ along the $x=y$ diagonal, see Figure~\ref{fig:SSYT} for an example.
 The \deff{Frobenius notation} of $\la$ is defined as $(\la_1-1, \ldots, \la_d-d| \la_1^\prime-1,\ldots, \la_d^\prime-d)$ where $d$ is the size of the \deff{Durfee square}, i.e., the maximal integer $d$ with $\la_d \geq d$. 
A \deff{semistandard Young tableaux} (SSYT) of shape $\la$ is a filling of $\la$ with positive integers such that rows are weakly decreasing from left to right and columns are strictly decreasing from bottom to top. For an SSYT $T$ we define its weight by
\[
\x^T=x_1^{\#\text{ of 1's in } T}x_2^{\# \text{ of 2's in } T}\cdots x_n^{\# \text{ of n's in } T}.
\]
Denote by $\SSYT_\la$ the set of SSYTs of shape $\la$. For an infinite family of variables $\x=(x_1,x_2,\ldots)$, we define the \deff{Schur function} $s_\la(\x)$ as the multivariate generating function for SSYTs of shape~$\la$
\[
s_\la(\x):=\sum_{T \in \SSYT_\la}\x^T.
\]
Two important special cases of the Schur function are the \deff{complete homogeneous symmetric function} $h_k(\x) = s_{(k)}(\x)$ and the \deff{elementary symmetric function} $e_k(\x) = s_{(1^k)}(\x)$, where $(1^k)$ denotes the partition consisting of $k$ times the entry $1$.
\begin{figure}
\begin{center}
\begin{tikzpicture}
\tyoung(0cm,0cm,1224,23,4)
\tyng(3.5cm,0cm, 3,2,1,1)
\end{tikzpicture}
\caption{\label{fig:SSYT} A SSYT of shape $(4,2,1)$ and weight $x_1 x_2^{3}x_3 x_4^2$ (left) and the conjugate  of the partition $(4,2,1)$ (right).}
\end{center} 
\end{figure}
\bigskip

There are three determinantal formulae for Schur functions which are of relevance for this paper: the \deff{Jacobi--Trudi formula}
\[
s_\la(\x) = \det_{1 \leq i,j \leq \ell(\la)} \Big(h_{\la_i-i}(\x) \Big),
\]
the \deff{Nägelsbach--Kostka identity}, or \deff{dual Jacobi--Trudi formula}
\[
s_\la(\x) = \det_{1 \leq i,j \leq n} \Big( e_{\la_i^\prime-i}(\x) \Big),
\]
and the \deff{Giambelli identity}
\[
s_\la(\x) = \det_{1 \leq i,j \leq d} \Big( s_{(a_i|b_j)}(\x) \Big).
\]
All of these formulae can be proven combinatorially by the celebrated Lindström--Gessel--Viennot lemma \cite{GesselViennot85, Lindstroem73}, introduced next. Let $G$ be an acyclic graph with an edge weight $\omega$. We define two paths in $G$ to be \deff{non-intersecting} if they do not have a vertex in common and a family of paths in $G$ to be non-intersecting if all pairs of paths within this family are non-intersecting. We define the \deff{weight of a path} $\omega(P)$ as the product of its edge weights
\[
\omega(P):=\prod_{e \in P} \omega(P),
\]
and the weight of a family of paths as the product of the weights of the individual paths.

\begin{lem}[\cite{GesselViennot85, Lindstroem73}]
\label{lem:LGV}
Given an acyclic graph $G$ with an edge weight, a set $\mb{S}=\{S_1,\ldots,S_n\}$ of starting points and $\mb{E}=\{E_1,\ldots,E_n\}$ of end points, we say that a family of paths from $\mb{S}$ to $\mb{E}$ has sign $\pi$ if the path starting at $S_i$ ends at $E_{\pi(i)}$ for all $i$. Then the signed and weighted enumeration of all non-intersecting families of paths from $\mb{S}$ to $\mb{E}$ is given by the determinant
\[
\det_{1 \leq i,j \leq n}\Big( \mc{P}(i,j) \Big),
\]
where $\mc{P}(i,j)$ is the weighted generating function of all paths from $S_i$ to $E_j$.
\end{lem}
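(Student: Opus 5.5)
The statement immediately above is Lemma~\ref{lem:LGV}, the Lindström--Gessel--Viennot lemma. I will prove it by expanding the determinant and cancelling the intersecting families with a sign-reversing involution. Writing out the Leibniz expansion and inserting the definition of $\mc{P}(i,j)$,
\[
\det_{1\le i,j\le n}\big(\mc{P}(i,j)\big)=\sum_{\pi\in S_n}\sgn(\pi)\prod_{i=1}^n \mc{P}(i,\pi(i)) = \sum_{(\pi,P_1,\ldots,P_n)} \sgn(\pi)\prod_{i=1}^n \omega(P_i).
\]
The last sum runs over all pairs consisting of a permutation $\pi$ and a family of paths with $P_i$ going from $S_i$ to $E_{\pi(i)}$; no non-intersection condition is imposed. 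Here I assume, as is implicit for the determinant to make sense, that $G$ has only finitely many paths between any two points, or that the weights make every $\mc{P}(i,j)$ a well-defined formal power series. The non-intersecting families contribute exactly the claimed signed enumeration. It therefore suffices to show that the intersecting families contribute zero in total.

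For this I construct an involution $\Phi$ on the set of intersecting families that preserves the total weight and reverses the sign. Fix a total order on the vertices of $G$. Given an intersecting family, let $i$ be the smallest index such that $P_i$ shares a vertex with another path. Let $x$ be the first vertex along $P_i$ that lies on some other path, and let $j$ be the smallest index $j\neq i$ with $x\in P_j$. The map $\Phi$ exchanges the tails of $P_i$ and $P_j$ after $x$:
\begin{itemize}
\item the new $P_i'$ follows $P_i$ up to $x$ and then $P_j$;
\item the new $P_j'$ follows $P_j$ up to $x$ and then $P_i$;
\item all other paths are unchanged.
\end{itemize}
The new permutation is $\pi\circ(i\,j)$, so the sign flips. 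The multiset of edges used is unchanged, so the weight is preserved. Since $G$ is acyclic, $P_i'$ and $P_j'$ are genuine paths, because a concatenation of walks in an acyclic graph cannot revisit a vertex. The resulting family is still intersecting, since both new paths pass through $x$.

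The main obstacle is checking that $\Phi$ is really an involution, that is, that the new family selects the same triple $(i,x,j)$.
\begin{itemize}
\item The set of vertices lying on at least two paths is unchanged by the swap, because the union of vertex multisets is preserved. Hence the minimal index $i$ is unchanged: the paths with index below $i$ are untouched and still meet no other path.
\item The initial segment of $P_i'$ up to $x$ equals that of $P_i$, so $x$ is again the first intersection vertex along $P_i'$. One needs to check that no earlier vertex of $P_i$ becomes an intersection vertex. This holds because the earlier vertices of $P_i$ met no other path, and the new tails leave them unchanged.
\item The set of paths through $x$ is the same, so $j$ is again the smallest index $j\neq i$ with $x$ on its path. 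Consequently $\Phi(\Phi(F))=F$.
\end{itemize}
With the involution in place, the intersecting families pair off with opposite signs and equal weights. Only the non-intersecting families survive in the expansion, which gives the stated determinant formula.
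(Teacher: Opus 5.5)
Your argument is correct: it is the standard Gessel--Viennot tail-swapping sign-reversing involution, which is exactly the argument in the sources the paper cites, since the paper states Lemma~\ref{lem:LGV} without proof and refers to \cite{GesselViennot85, Lindstroem73}. You handle the key point, that the swapped family selects the same triple $(i,x,j)$, correctly via the invariance of the set of vertices lying on at least two paths; the fixed total order on vertices is never used and can be dropped.
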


\section{The symmetric generating function for totally symmetric plane partitions}
\label{sec:sym gen fct}
A \deff{plane partition} $\pi=(\pi_{i,j})_{1 \leq i \leq a, 1 \leq j \leq b}$ inside an $(a,b,c)$-box is an array of integers in $[0,c]$ such that rows and columns are weakly decreasing, i.e., $\pi_{i,j}  \geq \pi_{i+1,j}$ and $\pi_{i,j} \geq \pi_{i,j+1}$. 
They can be visualised graphically as stacks of cubes by replacing the entry $a_{i,j}$ by $a_{i,j}$ many unit cubes, see Figure~\ref{fig:PPs} for an example. As observed by David and Tomei \cite{DavidTomei89}, by considering a two dimensional picture of such a configuration, one obtains bijectively a lozenge tiling of a regular hexagon with side lengths $(a,b,c,a,b,c)$. For simplicity we identify a plane partition with its corresponding lozenge tiling.
\begin{figure}[h]
\begin{center}
 \begin{tikzpicture}
 \begin{scope}[scale=0.65]
  \node at (1,3) {4};
\node at (2,3) {3};
\node at (3,3) {3};
\node at (4,3) {1};
\node at (1,2) {4};
\node at (2,2) {2};
\node at (3,2) {1};
\node at (4,2) {0};
\node at (1,1) {2};
\node at (2,1) {0};
\node at (3,1) {0};
\node at (4,1) {0};
 \end{scope}
 \begin{scope}[scale=0.4, xshift=12cm, yshift=2cm]
 \PlanePartition{{4,3,3,1},{4,2,1,0},{2,0,0}}
 \end{scope}
  \begin{scope}[scale=0.4, xshift=21cm, yshift=2cm]
  \TilingBox{4}{3}{4}{3}
 \PlanePartitionWhite{{4,3,3,1},{4,2,1,0},{2,0,0}}
 \end{scope}
 \end{tikzpicture}
\caption{\label{fig:PPs} A plane partition inside a $(3,4,4)$-box (left), its graphical representation as stacks of cubes (middle) and its corresponding lozenge tiling of a hexagon (right).}
\end{center}
\end{figure}
 For this paper, the following symmetry classes are of relevance.  We call a lozenge tiling (and also its corresponding plane partition)
\begin{itemize}
\item \deff{symmetric} if it is invariant under vertical reflection and
\item \deff{cyclically symmetric} if it is invariant under rotation by $120$ degrees.
\end{itemize}
A plane partition is called \deff{totally symmetric}\footnote{Note that the corresponding lozenge tiling of a totally symmetric plane partition is not symmetric with respect to the full dihedral group of the hexagon (these are called totally symmetric self-complementary) as this notion was introduced before the relation to lozenge tilings was found.} if it is both symmetric and cyclically symmetric. Denote by $\TSPP_n$ the set of \deff{totally symmetric plane partitions} (TSPPs).
For a totally symmetric plane partition $T=(T_{i,j})_{1\leq i,j \leq n} \in \TSPP_n$, we define its diagonal $\diag(T)=(T_{i,i})_{1 \leq i \leq n}^\prime$ and the partition $\pi_k(T)=(a_1,\ldots,a_d|b_1+k,\ldots,b_d+k)$ where $(a_1,\ldots,a_d|b_1,\ldots,b_d)$ is the Frobenius notation of $\diag(T)$ as well as the weight 
\[
\omega(T):=r^d u^{\sum_{i=1}^d (a_i+1)}  v^{\binom{n}{2}-\sum_{i=1}^d (b_i+1)} w^{\sum_{i=1}^d (b_i-a_i)}.
\]
Analogously to the term \emph{(exponential) generating function}, we call the family $\A_{n,k}$ of symmetric functions in the variables $\x=(x_1,x_{2},\ldots)$ 
\begin{equation}
\label{eq:sym gen fct}
\A_{n,k}(\x;r,u,v,w)= \sum_{T \in \TSPP_{n-1}} \omega(T) s_{\pi_k(T)}(\x),
\end{equation}
the \deff{symmetric generating function for TSPPs}.
\bigskip

It was shown by the second author and Ilse Fischer \cite[Theorem 1.1]{AignerFischer24} that by restricting to the finite family of variables $(x_1,\ldots,x_{n+k-1})$, the symmetric polynomial $\A_{n,1}(\x;1,u,v,w)$ is equal to a novel multivariate generating function for alternating sign matrices introduced in \cite{FischerSchreierAigner23} and furthermore  \cite[Theorem 1.2]{AignerFischer24} that $\A_{n,k}$  yields refined enumeration formulae for cyclically symmetric lozenge tilings of a cored hexagon which generalise cyclically symmetric plane partitions.

\section{Proof of Theorem~\ref{thm:main}}
\label{sec:proof}
Our proof strategy is as follows. Using the Lindström--Gessel--Viennot lemma (Lemma~\ref{lem:LGV}), we present in Section~\ref{sec:epaths} a combinatorial interpretation of \eqref{eq:dual JT}. By using the method of \emph{dualising} introduced by Gessel and Viennot in \cite{GesselViennot85}, we obtain in Section~\ref{sec:hpaths} the combinatorial model for \eqref{eq:JT} and by using a mixed dualisation for \eqref{eq:Giambelli 2}. Again, the corresponding determinants are obtained by applying the Lindström--Gessel--Viennot lemma. Finally we prove algebraically the equality of the determinants \eqref{eq:Giambelli 1} and \eqref{eq:Giambelli 2} in Section~\ref{sec:mixedpaths}. This finishes the proof as it is shown in \cite[Lemma 6.2]{AignerFischer24} that \eqref{eq:Giambelli 1} is equal to $\A_{n+1,k}(\x;r,u,v,w)$.

\subsection{A combinatorial interpretation for \eqref{eq:dual JT}}
\label{sec:epaths}

For $1 \leq i,j \leq n$, define the points
\[
S_i=(1-i,i-1), \quad S_i^\prime = (k+1,-k-2i+1)\quad \textit{and} \quad E_j=(k+n+1-j,\infty).
\]
We regard non-intersecting lattice paths where the $i$-th path starts either at $S_i$ or $S_i^\prime$ and ends at an $E_j$. For a point $(a,b)$, there are two step sets depending on its position.
\begin{itemize}
\item For $a+b \geq 0$, we are allowed to take the steps $(1,0)$ and $(0,1)$ where the first is weighted by $x_{a+b+1}$ and the second by $1$.
\item If $a>k$ and $a+b \leq 0$, the step set consists of $(1,-1)$ and $(1,1)$ where the first step is weighted by $\frac{w}{u}$ and the second again by $1$.
\item On the intersection of both regions, we are allowed to take any of these steps.
\end{itemize}
For simplicity, we call such a family of paths a \deff{family of e-paths of size $(n,k)$}; see Figure~\ref{fig:e-path} for an example. The weight of a family of e-paths is the product over all step weights times the product over $r u^i$ if the $i$-th path starts at $S_i$ and $v^i$ if the $i$-th path starts at $S_i^\prime$ for $1 \leq i \leq n$.
\begin{figure}[h]
\begin{tikzpicture}[scale=.4]
\foreach \i in {-5,...,3}{
		\draw[dashed ,gray] (-1*\i,\i) -- (5,\i);
		\draw[dashed ,gray] (-1*\i,\i) -- (-1*\i,6.75);
}
\foreach \i in {3,...,6}{
		\draw[dashed ,gray] (-3,\i) -- (5,\i);
}
\foreach \i in {1,...,4}{
		\draw[dashed ,gray] (2,-2*\i) -- (6-\i,-4-\i);
		\draw[dashed ,gray] (2,-2*\i) -- (1+\i,-1-\i);
}

\filldraw (0,0) circle (3pt);
\filldraw (-1,1) circle (3pt);
\filldraw (-2,2) circle (3pt);
\filldraw (-3,3) circle (3pt);
\node at (-1,0) {$S_1$};
\node at (-2,1) {$S_2$};
\node at (-3,2) {$S_3$};
\node at (-4,3) {$S_4$};

\filldraw (2,-2) circle (3pt);
\filldraw (2,-4) circle (3pt);
\filldraw (2,-6) circle (3pt);
\filldraw (2,-8) circle (3pt);
\node at (0,-2) {$S_1^\prime = S_1^{\prime\prime}$};
\node at (1.25,-4) {$S_2^\prime$};
\node at (1.25,-6) {$S_3^\prime$};
\node at (1.25,-8) {$S_4^\prime$};

\filldraw (5,6.75) circle (3pt);
\filldraw (4,6.75) circle (3pt);
\filldraw (3,6.75) circle (3pt);
\filldraw (2,6.75) circle (3pt);
\node at (5.3,7.35) {$E_1$};
\node at (4.1,7.35) {$E_2$};
\node at (2.9,7.35) {$E_3$};
\node at (1.7,7.35) {$E_4$};

\filldraw (4,-4) circle (3pt);
\node at (4,-5) {$S_3^{\prime\prime}$};

\draw[very thick] (2,-4) -- (3,-3) -- (4,-4) -- (4,-1) -- (5,-1) -- (5,6.75);
\draw[very thick] (0,0) -- (1,0) -- (1,1) -- (3,1) -- (3,2) -- (4,2) -- (4,6.75);
\draw[very thick] (-2,2) -- (1,2) -- (1,3) -- (2,3) -- (3,3) -- (3,6.75);
\draw[very thick] (-3,3) -- (-2,3) -- (-2,4) -- (0,4) -- (0,5) -- (2,5) -- (2,6.75);

\begin{scope}[xshift=-12cm]
\filldraw[white!50!Red] (2,-2) -- (5,-5) -- (2,-8) -- (2,-2);
\filldraw[white!80!Orchid] (-3,3) -- (5,-5) -- (5,6.75) -- (-3,6.75) -- (-3,3);

\foreach \i in {-5,...,3}{
		\draw[dashed ,gray] (-1*\i,\i) -- (5,\i);
		\draw[dashed ,gray] (-1*\i,\i) -- (-1*\i,6.75);
}
\foreach \i in {3,...,6}{
		\draw[dashed ,gray] (-3,\i) -- (5,\i);
}
\foreach \i in {1,...,4}{
		\draw[dashed ,gray] (2,-2*\i) -- (6-\i,-4-\i);
		\draw[dashed ,gray] (2,-2*\i) -- (1+\i,-1-\i);
}

\draw[very thick] (-3,3) -- (5,-5) -- (5,6.75) -- (-3,6.75) -- (-3,3);
\draw[very thick] (2,-2) -- (5,-5) -- (2,-8) -- (2,-2);

\draw[->, ultra thick] (1,2) -- (2,2);
\draw[->, ultra thick] (1,2) -- (1,3);

\draw[->, ultra thick] (3,-5) -- (4,-4);
\draw[->, ultra thick] (3,-5) -- (4,-6);
\end{scope}
\end{tikzpicture}
\caption{\label{fig:e-path} On the left: the two regions defining the step set coloured in purple and red respectively together with the allowed steps represented as arrows. On the right: a family of $e$-paths of rank $(4,1)$.}
\end{figure}
By the Lindstr\"om--Gessel--Viennot lemma and Remark~\ref{rem:LGV variant} (see below), it suffices to calculate the generating function of the $i$-th e-path starting at $S_i$ or $S_j$ respectively and ending at $E_j$.
 The generating function of all paths starting at $S_i$ and ending at $E_j$ is $r u^i e_{(n-i+k)}$ since there are $n-i+k$ horizontal steps with non-repeating weights. Given a path starting at $S_i^\prime$, we split it into two parts. The first part consists only of $(1,1)$ and $(1,-1)$ steps and ends at $S_m^{\prime\prime}=(m+k,-m-k)$ and the second part is from $S_m^{\prime\prime}$ to $E_j$ using $(1,0)$ and $(0,1)$ steps. For a fixed $m$, the generating function for such a first part from $S_i^\prime$ to $S_m^{\prime\prime}$ is $\binom{m-1}{m-i}\left(\frac{w}{u}\right)^{m-i}$ since we have to take $m-1$ steps and exactly $m-i$ of them are $(1,-1)$. The generating function for paths from $S_m^{\prime\prime}$ to $E_j$ is $e_{(n+1-j-m)}(\x)$.
Hence the generating function of all paths from $S_i^\prime$ to $E_j$ is equal to
\[
v^{i}\sum_{m}\binom{m-1}{m-i}e_{(n+1-j-m)}(\x) \left(\frac{w}{u}\right)^{m-i}.
\]
By applying Remark~\ref{rem:LGV variant}, we obtain the assertion.

\begin{rem}
\label{rem:LGV variant}
\begin{enumerate}
\item We still owe an explanation as to why we are allowed to use the Lindström-Gessel-Viennot lemma in our setting. Let the graph and step weights be as above and let the weight of a family of paths be the product over all step weights times the product over $a_i$ if the $i$-th path starts at $S_i$ and $b_i$ if the $i$-th path starts at $S_i^\prime$ for $1\leq i \leq n$. Since the lemma interprets the determinant as a signed enumeration of lattice paths, we need to deal with the fact that the sign of the non-crossing families changes every time a starting point $S_i^\prime$ instead of $S_i$ is chosen. For this, fix a subset $I \subseteq [1,n]$ and a family of paths where the $i$-th path starts at $S_i^\prime$ if $i\in I$ and at $S_i$ otherwise. For $I=\{i_1 < \cdots < i_l\}$ and $[1,n]\setminus I= \{ j_1 < \cdots < j_k\}$, denote by $\sigma_I$ the permutation $i_l,\ldots,i_1,j_1,\ldots,j_k$ in one line notation and let $\pi$ be the permutation such that the $i$-th path ends at $E_{\pi(i)}$. For $I = \emptyset$, we are back in the classical setting. 
We can reduce the case of a general $I$ to the previous case by relabelling the starting indices. More precisely, we obtain the labels $1,\ldots,n$ from right to left for the starting vertices by applying the permutation $\sigma_I^{-1}$ to the starting indices.  
This yields a total sign of $\sgn(\pi) \sgn(\sigma_I^{-1})$ which differs from the classical case by a factor of $\sgn(\sigma_I^{-1})= \prod_{i \in I}(-1)^{i-1}$. This sign can be envisioned as each chosen $S_i^\prime$ having to switch places with $i-1$ chosen starting points (for each $t<i$, either $S_t$ or $S^\prime_t$ - they are both on the way) in order to end up in the spot of $S_i$. 
 We claim that the weighted enumeration of all non-intersecting families of such paths is equal to the determinant
\begin{equation}
\label{eq:LGV variation}
\det_{1 \leq i,j \leq n}\Big( a_i \mc{P}(i,j) + (-1)^{i-1} b_i \mc{P}^\prime(i,j) \Big),
\end{equation}
where $\mc{P}(i,j)$, respectively $\mc{P}^\prime(i,j)$, is the weighted generating function of all paths from $S_i$, respectively $S_i^\prime$, to $E_j$.
Indeed, for a given subset $I \subseteq [1,n]$, the weighted and signed enumeration of all non-intersecting families, where the $i$-th path starts at $S_i^\prime$ if $i \in I$ and at $S_i$ otherwise, is given by
\begin{multline*}
\sgn(\sigma_I^{-1})\prod_{i \in [1,n]\setminus I}a_i \prod_{i \in I}b_i 
 \det_{1 \leq i,j \leq n}  \left( 
\begin{cases}
\mc{P}(i,j) \quad & i \notin I,\\
\mc{P}^\prime(i,j) & i \in I,
\end{cases}
\right) \\
= 
\det_{1 \leq i,j \leq n} \left( 
\begin{cases}
a_i \mc{P}(i,j) \quad & i \notin I,\\
(-1)^{i-1}b_i \mc{P}^\prime(i,j) & i \in I,
\end{cases}
\right)
.
\end{multline*}
Using the multilinearity of the determinant, we obtain the anticipated result by summing over all subsets $I \subseteq [1,n]$.

\item If instead we find ourselves in the setting of Subsection~\ref{sec:dualising}, i.e. the labels are as in the right picture in Figure~\ref{fig:h-path}, then we want $\sigma_I$ to be the permutation $j_1,\ldots,j_k, i_l,\ldots,i_1$ in one line notation with $\sgn(\sigma_I)=\prod_{i \in I}(-1)^{n-i}$. Hence, we exchange $(-1)^{i-1}$ by $(-1)^{n-i}$ in \eqref{eq:LGV variation}.
\end{enumerate}

\end{rem}

\subsection{Dualising the paths}\label{sec:dualising}
\label{sec:hpaths}
We will first define a \deff{family of h-paths of size $(n,k)$}, then we will show, using a concept called \deff{dualisation}, that they are in bijection with the \deff{family of e-paths of size $(n,k)$} defined above and finally, we will compute its generating function to obtain formula~\eqref{eq:JT}.
Thus, for $1\leq i,j\leq n+k$, we define the following points
\begin{align*}
\wt{E}_j = (j-n,\infty) \quad \textit{and} \quad
\wt{S}_i = (i-n,-i+n),
\end{align*}
as well as for $1\leq i,j\leq n$, we define
\begin{align*}
\wt{S}_i^\prime = (k+1,-2n-k+2i-1) \quad \textit{and} \quad
\wt{S}_{m}^{\prime\prime} = (k+n+1-m,-k-n-1+m).
\end{align*}
We look at non-intersecting lattice paths with the $i$-th path starting either at $\wt{S}_i$ or $\wt{S}_i^\prime$ and ending at an $\wt{E}_j$. For a point $(a,b)$, there are two step sets depending on its position.
\begin{itemize}
\item For $a+b \geq 0$, we are allowed to take the steps $(-1,1)$ and $(0,1)$ where the first is weighted by $x_{a+b+1}$ and the second by $1$.
\item If $a>k$ and $a+b \leq 0$, the step set consists of $(0,2)$ and $(1,1)$ where the first step is weighted by $\frac{w}{u}$ and the second again by $1$.
\item On the intersection of both regions we are allowed to take any of these steps.
\end{itemize}
The weight of a family of h-paths is the product over all step weights times the product over $r u^{n+1-i}$ if the $i$-th path starts at $\wt{S}_i^\prime$ and $v^{n+1-i}$ if the $i$-th path starts at $\wt{S}_i$ for $1 \leq i \leq n$.
\begin{figure}[h]
\begin{tikzpicture}[scale=.4]

\begin{scope}[xshift=-12cm]
\foreach \i in {-5,...,3}{
		\draw[dashed ,gray] (-1*\i,\i) -- (5,\i);
		\draw[dashed ,gray] (-1*\i,\i) -- (-1*\i,6.75);
}
\foreach \i in {3,...,6}{
		\draw[dashed ,gray] (-3,\i) -- (5,\i);
}
\foreach \i in {1,...,4}{
		\draw[dashed ,gray] (2,-2*\i) -- (6-\i,-4-\i);
		\draw[dashed ,gray] (2,-2*\i) -- (1+\i,-1-\i);
}

\filldraw (0,0) circle (3pt);
\filldraw (-1,1) circle (3pt);
\filldraw (-2,2) circle (3pt);
\filldraw (-3,3) circle (3pt);
\node at (-1,0) {$S_1$};
\node at (-2,1) {$S_2$};
\node at (-3,2) {$S_3$};
\node at (-4,3) {$S_4$};

\filldraw (2,-2) circle (3pt);
\filldraw (2,-4) circle (3pt);
\filldraw (2,-6) circle (3pt);
\filldraw (2,-8) circle (3pt);
\node at (0,-2) {$S_1^\prime = S_1^{\prime\prime}$};
\node at (1.25,-4) {$S_2^\prime$};
\node at (1.25,-6) {$S_3^\prime$};
\node at (1.25,-8) {$S_4^\prime$};

\filldraw (5,6.75) circle (3pt);
\filldraw (4,6.75) circle (3pt);
\filldraw (3,6.75) circle (3pt);
\filldraw (2,6.75) circle (3pt);
\node at (5.3,7.35) {$E_1$};
\node at (4.1,7.35) {$E_2$};
\node at (2.9,7.35) {$E_3$};
\node at (1.7,7.35) {$E_4$};

\filldraw (4,-4) circle (3pt);
\node at (4,-5) {$S_3^{\prime\prime}$};

\draw[very thick] (2,-4) -- (3,-3) -- (4,-4) -- (4,-1) -- (5,-1) -- (5,6.75);
\draw[very thick] (0,0) -- (1,0) -- (1,1) -- (3,1) -- (3,2) -- (4,2) -- (4,6.75);
\draw[very thick] (-2,2) -- (1,2) -- (1,3) -- (2,3) -- (3,3) -- (3,6.75);
\draw[very thick] (-3,3) -- (-2,3) -- (-2,4) -- (0,4) -- (0,5) -- (2,5) -- (2,6.75);
\end{scope}

\draw[ultra thick] (2,-4) -- (3,-3) -- (4,-4) -- (4,-1) -- (5,-1) -- (5,6.75);
\draw[ultra thick] (0,0) -- (1,0) -- (1,1) -- (3,1) -- (3,2) -- (4,2) -- (4,6.75);
\draw[ultra thick] (-2,2) -- (1,2) -- (1,3) -- (2,3) -- (3,3) -- (3,6.75);
\draw[ultra thick] (-3,3) -- (-2,3) -- (-2,4) -- (0,4) -- (0,5) -- (2,5) -- (2,6.75);
\draw[ultra thick, Red] (-1,1) -- (-3,3) -- (-3,6.75);
\draw[ultra thick, Red] (1,-1) -- (0,0) -- (0,1) -- (-1,2) -- (-1,3) -- (-2,4) -- (-2,6.75);
\draw[ultra thick, Red] (2,-2) -- (2,0) -- (0,2) -- (0,3) -- (-1,4) -- (-1,6.75);
\draw[ultra thick, Red] (2,-6) -- (3,-5) -- (3,-3) -- (3,0) -- (2,1) -- (2,2) -- (1,3) -- (1,4) -- (0,5) -- (0,6.75);
\draw[ultra thick, Red] (2,-8) -- (5,-5) -- (5,-2) -- (4,-1) -- (4,1) -- (2,3) -- (2,4) -- (1,5) -- (1,6.75);

\foreach \i in {-5,...,3}{
		\draw[dashed ,gray] (-1*\i,\i) -- (5,\i);
		\draw[dashed ,gray] (-1*\i,\i) -- (-1*\i,6.75);
}
\foreach \i in {3,...,6}{
		\draw[dashed ,gray] (-3,\i) -- (5,\i);
}
\foreach \i in {1,...,4}{
		\draw[dashed ,gray] (2,-2*\i) -- (6-\i,-4-\i);
		\draw[dashed ,gray] (2,-2*\i) -- (1+\i,-1-\i);
}

\filldraw (1,-1) circle (3pt)node[below left] {};
\filldraw (0,0) circle (3pt)node[below left] {};
\filldraw (-1,1) circle (3pt)node[below left] {};
\filldraw (-2,2) circle (3pt)node[below left] {};
\filldraw (-3,3) circle (3pt)node[below left] {};

\filldraw (2,-2) circle (3pt)node[below left] {};
\filldraw (2,-4) circle (3pt)node[below left] {};
\filldraw (2,-6) circle (3pt)node[below left] {};
\filldraw (2,-8) circle (3pt)node[below left] {};

\begin{scope}[xshift=12cm]

    \foreach \i in {-5,...,3}{
		\draw[dashed ,gray] (-1*\i,\i) -- (5,\i);
		\draw[dashed ,gray] (-1*\i,\i) -- (-1*\i,6.75);
}
\foreach \i in {3,...,6}{
		\draw[dashed ,gray] (-3,\i) -- (5,\i);
}
\foreach \i in {1,...,4}{
		\draw[dashed ,gray] (2,-2*\i) -- (6-\i,-4-\i);
		\draw[dashed ,gray] (2,-2*\i) -- (1+\i,-1-\i);
}

\draw[ultra thick, Red] (-1,1) -- (-3,3) -- (-3,6.75);
\draw[ultra thick, Red] (1,-1) -- (0,0) -- (0,1) -- (-1,2) -- (-1,3) -- (-2,4) -- (-2,6.75);
\draw[ultra thick, Red] (2,-2) -- (2,0) -- (0,2) -- (0,3) -- (-1,4) -- (-1,6.75);
\draw[ultra thick, Red] (2,-6) -- (3,-5) -- (3,-3) -- (3,0) -- (2,1) -- (2,2) -- (1,3) -- (1,4) -- (0,5) -- (0,6.75);
\draw[ultra thick, Red] (2,-8) -- (5,-5) -- (5,-2) -- (4,-1) -- (4,1) -- (2,3) -- (2,4) -- (1,5) -- (1,6.75);

\filldraw (1,-1) circle (3pt);
\filldraw (0,0) circle (3pt);
\filldraw (-1,1) circle (3pt);
\filldraw (-2,2) circle (3pt);
\filldraw (-3,3) circle (3pt);
\node at (0,-1) {$\wt{S}_5$};
\node at (-1,0) {$\wt{S}_4$};
\node at (-2,1) {$\wt{S}_3$};
\node at (-3,2) {$\wt{S}_2$};
\node at (-4,3) {$\wt{S}_1$};

\filldraw (2,-2) circle (3pt);
\filldraw (2,-4) circle (3pt);
\filldraw (2,-6) circle (3pt);
\filldraw (2,-8) circle (3pt);
\node at (0,-2.25) {$\wt{S}_4^\prime=\wt{S}_4^{\prime\prime}$};
\node at (1.25,-4) {$\wt{S}_3^\prime$};
\node at (1.25,-6) {$\wt{S}_2^\prime$};
\node at (1.25,-8) {$\wt{S}_1^\prime$};

\filldraw (-3,6.75) circle (3pt);
\filldraw (-2,6.75) circle (3pt);
\filldraw (-1,6.75) circle (3pt);
\filldraw (0,6.75) circle (3pt);
\filldraw (1,6.75) circle (3pt);
\node at (-3.4,7.5) {$\wt{E}_1$};
\node at (-2.2,7.5) {$\wt{E}_2$};
\node at (-1,7.5) {$\wt{E}_3$};
\node at (0.2,7.5) {$\wt{E}_4$};
\node at (1.4,7.5) {$\wt{E}_5$};

\filldraw (2,-2) circle (3pt);
\filldraw (3,-3) circle (3pt);
\filldraw (5,-5) circle (3pt);
\node at (5.75,-5) {$\wt{S}_1^{\prime\prime}$};
\end{scope}
\end{tikzpicture}
\caption{\label{fig:h-path} On the left: a family of $e$-paths of rank $(4,1)$. On the right: its dualised family of $h$-paths. In the middle: Both above each other.}
\end{figure}
Notice that for $i\leq n$, we have the same possible starting points as for the e-paths but they are indexed the other way around (compare the left and the right pictures in Figure~\ref{fig:h-path}) and for $i\geq n+1$, we get additional starting points that now leave no choice.\\

 Next, we will give a weight-preserving bijection between all such families of e-paths of size $(n,k)$ and all families of h-paths of size $(n,k)$ by \deff{dualisation}. For this, take a family of e-paths, $\mathcal{P}$, and map it to the family of h-paths, $\wt{\mathcal{P}}$, which is defined by the following.
\begin{itemize}
    \item The starting points are all those $\wt{S}_i$ and $\wt{S}_i^\prime$ that are not already starting points in $\mathcal{P}$. In practice, this means that $\wt{S}_i^\prime$ with $i\geq n+1$ are always starting points and for $i\leq n$, we choose $\wt{S}_i^\prime$ if $S_{n+1-i}$ is a starting point in $\mathcal{P}$ and $\wt{S}_i$ if $S_{n+1-i}^\prime$ is a starting point in $\mathcal{P}$.
    \item For $a+b\geq 0$, whenever there is a step $(1,0)$ at $(a,b)$ in $\mathcal{P}$, we take a step $(-1,1)$ at $(a+1,b-1)$ in $\wt{\mathcal{P}}$. Visually, this can be seen as "tilting down" the $(1,0)$ step by $45^\circ$ clockwise. All other steps are $(0,1)$. For  $a+b\leq 0$, we do the analogous thing by converting $(1,-1)$ steps into $(0,1)$ steps and letting the rest be $(1,1)$ steps.
    
\end{itemize}
To see that this indeed describes a weight-preserving bijection, we best look at the overlap of $\mathcal{P}$ with $\wt{\mathcal{P}}$ as in Figure ~\ref{fig:h-path}. First of all, we see immediately that this operation preserves weights by definition. The mapping is also clearly injective. By construction, the dual exactly covers all the points not covered before, with the direction of paths changing from right to left. Whenever there is a horizontal step (resp. diagonal), a vertical path from $\mathcal{P}$ and one from $\wt{\mathcal{P}}$ cross. This way, $\mathcal{P}$ is uniquely defined by $\wt{\mathcal{P}}$ and vice versa. \\

 Lastly, we want to show that the generating function for all such families of h-paths is indeed given by \eqref{eq:JT}. We can now apply the second part of Remark~\ref{rem:LGV variant} by setting $(a_i,b_i)=(v^{n-i+1},r u^{n-i+1})$ for $1 \leq i \leq n$ and equal to $(1,0)$ for $i>n$. It then suffices to enumerate all paths from $\wt{S}_i$ or $\wt{S}_i^\prime$ respectively to $\wt{E}_j$. 
 Getting from $\wt{S}_i$ to $\wt{E}_j$ requires $i-j$ diagonal steps with weights possibly repeating (this happens when the steps lie on the same diagonal as in the path starting from $\wt{S}_5^\prime$ in Figure~\ref{fig:h-path} for instance). Hence, the generating function of this path is given by $v^{n+1-i}h_{i-j}(\mathbf{x})$ when $1\leq i \leq n$ and by $h_{i-j}(\mathbf{x})$ when $n+1\leq i \leq n+k$. From $\wt{S}_m^{\prime\prime}$ to $\wt{E}_j$, we take $k+1+2n-m-j$ steps with a possibly repeating weight. Thus, the generating function for this segment reads $h_{k+1+2n-m-j}(\mathbf{x})$. From $\wt{S}_i^\prime$ to $\wt{S}_m^{\prime\prime}$, we need $n-i$ steps total, $m-i$ of which are vertical and thus weighted by $\frac{w}{u}$. Recalling that we start with an initial weight of $r u^{n+1-i}$, this gives the weight $r u^{n+1-i}\binom{n-i}{m-i}\left(\frac{w}{u}\right)^{m-i}$ for the segment, amounting to
\[
r u^{n+1-i}\sum\limits_{m}^{n-i}\binom{n-i}{m-i} h_{k+1+2n-m-j}(\x)  \left(\frac{w}{u}\right)^{m-i},
\]
for the whole path from $\wt{S}_i^\prime$ to $\wt{E}_j$ yielding the assertion.

\subsection{Two types of Giambelli determinants}
\label{sec:mixedpaths}

In order to obtain the Giambelli type identity \eqref{eq:Giambelli 2}, we apply a mixed dualisation to the family of $e$-paths. For $1 \leq i \leq n$, define the points
\[
\wh{E}_i = (k+1,-k-2i+1) \qquad \textit{and} \qquad \wh{E}_i^\prime=(k+i,-k-i).
\]
A \deff{family of mixed paths of rank $(n,k)$} is a family of $|I|$ non-intersecting lattice paths for a subset $I \subseteq \{1,2,\ldots,n\}$ with starting points $S_i$ and endpoints $\wh{E}_i$ for $i \in I$. The step set for a point $(a,b)$ is again depending on its position.
\begin{itemize}
\item For $a<k$, the allowed steps are $(1,0)$ and $(0,1)$ with weights $x_{a+b+1}$ and $1$, respectively. 
\item For $a=k$, the allowed steps are $(1,-1)$ and $(0,1)$ with weights $x_{a+b+1}$ and $1$, respectively. 
\item For $a>k$ and $a+b\geq 0$, the allowed steps are $(1,-1)$ and $(0,-1)$ with the weights $x_{a+b+1}$ and $1$, respectively.
\item For $a>k$ and $a+b\leq 0$, the allowed steps are $(-1,-1)$ and $(0,-2)$ with the weights $1$ and $\frac{w}{u}$, respectively.
\end{itemize}
For an example, see Figure~\ref{fig:Giambelli}. The weight of a family of mixed paths is given by the product of the weights of the steps times $\prod_{i \in I}r u^{i} \prod_{l \in [1,n]\setminus I}v^l$, i.e. we have a weight $r u^i$ if $S_i$ is chosen and $v^i$ if not. 

\begin{figure}[h]
\begin{tikzpicture}[scale=.4]

\begin{scope}[xshift=-12cm]
\foreach \i in {-5,...,3}{
		\draw[dashed ,gray] (-1*\i,\i) -- (5,\i);
		\draw[dashed ,gray] (-1*\i,\i) -- (-1*\i,6.75);
}
\foreach \i in {3,...,6}{
		\draw[dashed ,gray] (-3,\i) -- (5,\i);
}
\foreach \i in {1,...,4}{
		\draw[dashed ,gray] (2,-2*\i) -- (6-\i,-4-\i);
		\draw[dashed ,gray] (2,-2*\i) -- (1+\i,-1-\i);
}

\filldraw (0,0) circle (3pt);
\filldraw (-1,1) circle (3pt);
\filldraw (-2,2) circle (3pt);
\filldraw (-3,3) circle (3pt);
\node at (-1,0) {$S_1$};
\node at (-2,1) {$S_2$};
\node at (-3,2) {$S_3$};
\node at (-4,3) {$S_4$};

\filldraw (2,-2) circle (3pt);
\filldraw (2,-4) circle (3pt);
\filldraw (2,-6) circle (3pt);
\filldraw (2,-8) circle (3pt);
\node at (0,-2) {$S_1^\prime = S_1^{\prime\prime}$};
\node at (1.25,-4) {$S_2^\prime$};
\node at (1.25,-6) {$S_3^\prime$};
\node at (1.25,-8) {$S_4^\prime$};

\filldraw (5,6.75) circle (3pt);
\filldraw (4,6.75) circle (3pt);
\filldraw (3,6.75) circle (3pt);
\filldraw (2,6.75) circle (3pt);
\node at (5.3,7.35) {$E_1$};
\node at (4.1,7.35) {$E_2$};
\node at (2.9,7.35) {$E_3$};
\node at (1.7,7.35) {$E_4$};

\filldraw (4,-4) circle (3pt);
\node at (4,-5) {$S_3^{\prime\prime}$};

\draw[very thick] (2,-4) -- (3,-3) -- (4,-4) -- (4,-1) -- (5,-1) -- (5,6.75);
\draw[very thick] (0,0) -- (1,0) -- (1,1) -- (3,1) -- (3,2) -- (4,2) -- (4,6.75);
\draw[very thick] (-2,2) -- (1,2) -- (1,3) -- (2,3) -- (3,3) -- (3,6.75);
\draw[very thick] (-3,3) -- (-2,3) -- (-2,4) -- (0,4) -- (0,5) -- (2,5) -- (2,6.75);
\end{scope}

\foreach \i in {-5,...,3}{
		\draw[dashed ,gray] (-1*\i,\i) -- (5,\i);
		\draw[dashed ,gray] (-1*\i,\i) -- (-1*\i,6.75);
}
\foreach \i in {3,...,6}{
		\draw[dashed ,gray] (-3,\i) -- (5,\i);
}
\foreach \i in {1,...,4}{
		\draw[dashed ,gray] (2,-2*\i) -- (6-\i,-4-\i);
		\draw[dashed ,gray] (2,-2*\i) -- (1+\i,-1-\i);
}

\draw[ultra thick] (0,0) -- (1,0) -- (1,1);
\draw[ultra thick] (-2,2) -- (1,2) -- (1,3);
\draw[ultra thick] (-3,3) -- (-2,3) -- (-2,4) -- (0,4) -- (0,5) -- (1,5);
\draw[ultra thick, Red] (2,-2) -- (2,0) -- (1,1);
\draw[ultra thick, Red] (2,-6) -- (3,-5) -- (3,-3) -- (3,0) -- (2,1) -- (2,2) -- (1,3);
\draw[ultra thick, Red] (2,-8) -- (5,-5) -- (5,-2) -- (4,-1) -- (4,1) -- (2,3) -- (2,4) -- (1,5);

\draw[very thick, Orchid, dotted] (1,-8) -- (1,6.75);

\filldraw (1,-1) circle (3pt)node[below left] {};
\filldraw (0,0) circle (3pt);
\filldraw (-1,1) circle (3pt);
\filldraw (-2,2) circle (3pt);
\filldraw (-3,3) circle (3pt);
\node at (-1,0) {$S_1$};
\node at (-2,1) {$S_2$};
\node at (-3,2) {$S_3$};
\node at (-4,3) {$S_4$};

\filldraw (2,-2) circle (3pt);
\filldraw (2,-4) circle (3pt);
\filldraw (2,-6) circle (3pt);
\filldraw (2,-8) circle (3pt);
\node[fill=white] at (0,-2.8) {$\wh{E}_1=\wh{E}_1^\prime$};
\node at (1.75,-4.75) {$\wh{E}_2$};
\node at (1.75,-6.75) {$\wh{E}_3$};
\node at (1.75,-8.75) {$\wh{E}_4$};

\filldraw (2,-2) circle (3pt);
\filldraw (3,-3) circle (3pt);
\filldraw (4,-4) circle (3pt);
\filldraw (5,-5) circle (3pt);
\node at (5.75,-5) {$\wh{E}_4^\prime$};

\begin{scope}[xshift=12cm]
    \foreach \i in {-5,...,3}{
		\draw[dashed ,gray] (-1*\i,\i) -- (5,\i);
		\draw[dashed ,gray] (-1*\i,\i) -- (-1*\i,6.75);
}
\foreach \i in {3,...,6}{
		\draw[dashed ,gray] (-3,\i) -- (5,\i);
}
\foreach \i in {1,...,4}{
		\draw[dashed ,gray] (2,-2*\i) -- (6-\i,-4-\i);
		\draw[dashed ,gray] (2,-2*\i) -- (1+\i,-1-\i);
}

\draw[ultra thick, Red] (-1,1) -- (-3,3) -- (-3,6.75);
\draw[ultra thick, Red] (1,-1) -- (0,0) -- (0,1) -- (-1,2) -- (-1,3) -- (-2,4) -- (-2,6.75);
\draw[ultra thick, Red] (2,-2) -- (2,0) -- (0,2) -- (0,3) -- (-1,4) -- (-1,6.75);
\draw[ultra thick, Red] (2,-6) -- (3,-5) -- (3,-3) -- (3,0) -- (2,1) -- (2,2) -- (1,3) -- (1,4) -- (0,5) -- (0,6.75);
\draw[ultra thick, Red] (2,-8) -- (5,-5) -- (5,-2) -- (4,-1) -- (4,1) -- (2,3) -- (2,4) -- (1,5) -- (1,6.75);

\filldraw (1,-1) circle (3pt);
\filldraw (0,0) circle (3pt);
\filldraw (-1,1) circle (3pt);
\filldraw (-2,2) circle (3pt);
\filldraw (-3,3) circle (3pt);
\node at (0,-1) {$\wt{S}_5$};
\node at (-1,0) {$\wt{S}_4$};
\node at (-2,1) {$\wt{S}_3$};
\node at (-3,2) {$\wt{S}_2$};
\node at (-4,3) {$\wt{S}_1$};

\filldraw (2,-2) circle (3pt);
\filldraw (2,-4) circle (3pt);
\filldraw (2,-6) circle (3pt);
\filldraw (2,-8) circle (3pt);
\node at (0,-2.25) {$\wt{S}_4^\prime=\wt{S}_4^{\prime\prime}$};
\node at (1.25,-4) {$\wt{S}_3^\prime$};
\node at (1.25,-6) {$\wt{S}_2^\prime$};
\node at (1.25,-8) {$\wt{S}_1^\prime$};

\filldraw (-3,6.75) circle (3pt);
\filldraw (-2,6.75) circle (3pt);
\filldraw (-1,6.75) circle (3pt);
\filldraw (0,6.75) circle (3pt);
\filldraw (1,6.75) circle (3pt);
\node at (-3.4,7.5) {$\wt{E}_1$};
\node at (-2.2,7.5) {$\wt{E}_2$};
\node at (-1,7.5) {$\wt{E}_3$};
\node at (0.2,7.5) {$\wt{E}_4$};
\node at (1.4,7.5) {$\wt{E}_5$};

\filldraw (2,-2) circle (3pt);
\filldraw (3,-3) circle (3pt);
\filldraw (5,-5) circle (3pt);
\node at (5.75,-5) {$\wt{S}_1^{\prime\prime}$};
\end{scope}

\end{tikzpicture}
\caption{\label{fig:Giambelli} On the left: A family of $e$-paths of rank $(4,1)$. On the right: Its dualised family of $h$-paths. In the middle: Its half dualised family of mixed paths.}
\end{figure}

In order to obtain a weight-preserving bijection between the set of families of $e$-paths and the set of families of mixed paths of the same rank, we apply a partial dualisation. In particular, we dualise all $(1,0)$ and $(1,-1)$ steps weakly to the right of the line $x=k$ and fill out the rest of the picture with $(0,-1)$ and $(-1,-1)$ steps accordingly as described before. It is immediate from the construction that for each $1 \leq i \leq n$, we obtain a mixed path starting at $S_i$ exactly if there was an $e$-path starting at $S_i$. Together with the previous observation, this implies that partial dualisation is actually a weight-preserving bijection.

It remains to show that \eqref{eq:Giambelli 2} is the generating function of all families of mixed paths of rank $(n,k)$. First, we calculate the generating function for mixed paths starting at $S_j$ and ending at $\wh{E}_{i}$ for some $1 \leq i,j \leq n$. We split each such path into two parts: the initial part consisting of all steps ending weakly above the line $x+y=0$ and the terminal part. The initial part therefore is a path from $S_j$ to $\wh{E}_m^\prime$ for some $1 \leq m \leq i$. For a given $m$, the generating function of all mixed paths from $S_j$ to $\wh{E}_m^\prime$ is $s_{((m)^{j+k})/(((m-1)^{j+k-1})}(\x)$ where the tableau in the index is a skew shape that looks like a hook reflected along the $x+y=0$ diagonal. Its vertical component corresponds to the $(1,0)$ steps in the $e$-path where $a\leq k$ and its horizontal component to the $(-1,-1)$ steps in the $h$-path where $a\geq k$. By using jeu de taquin, as sketched in Figure~\ref{fig:jeu de taquin}, this can always be transformed bijectively into an actual hook and hence we can write $s_{(m-1|j+k-1)}(\x)$ instead. Note that while the explicit sliding moves depend on the actual semistandard filling, the final shape will be independent of the filling.
 Following the observations of the previous parts, the generating function of paths from $\wh{E}_m^\prime$ to $\wh{E}_i$ is equal to $(\frac{w}{u})^{i-m}\binom{i-1}{i-m}$. The generating function of all paths from $S_j$ to $\wh{E}_{i}$ is therefore
\[
ru^{j} \sum_{m=1}^i  \left(\frac{w}{u}\right)^{i-m}\binom{i-1}{i-m}s_{(m-1|j+k-1)}(\x).
\] 
By using the following lesser known trick or variation of Lindstr\"om--Gessel--Viennot, we obtain the assertion
\begin{multline*}
\sum_{I \subseteq [1,n]} \prod_{l \in [1,n] \setminus I} v^l 
\det_{i,j \in I} \left( ru^{j} \sum_{m=1}^i  \left(\frac{w}{u}\right)^{i-m}\binom{i-1}{i-m}s_{(m-1|j+k-1)}(\x) \right)\\
= \det_{1 \leq i,j \leq n} \left( v^i \delta_{i,j}+ r u^{j} \sum_{m=1}^i  \left(\frac{w}{u}\right)^{i-m}\binom{i-1}{i-m}s_{(m-1|j+k-1)}(\x)\right),
\end{multline*}
where we can interpret the right-hand side as trivial paths from $\wh{E}_i^\prime$ to itself whenever $i$ is not chosen.
\begin{figure}
\begin{center}
\begin{tikzpicture}[scale=.5]
\tgyoung(0cm,0cm,:::;,:::;,:::;,:::;,;;;;)
\tgyoung(5cm,0cm,:::;,:::;,:::;,::;;,;;:;)
\tgyoung(0cm,-5cm,:::;,:::;,:::;,::;,;;;;)
\tgyoung(5cm,-5cm,:::;,:::;,:::;,::;;,;;;)
\tgyoung(10cm,-5cm,:::;,:::;,:::;,;;;;,;)
\tgyoung(15cm,-5cm,;;;;,;,;,;,;)

\draw[->, thick] (3,1.25) -- (4,1.25);
\draw[->, thick] (3,-3.75) -- (4,-3.75);
\draw[->, thick] (1,-.75) -- (1,-1.75);
\draw[->, thick] (6,-.75) -- (6,-1.75);
\draw[->, thick, snake it] (7.5,-3.75) -- (9.5,-3.75);
\draw[->, thick, snake it] (12.5,-3.75) -- (14.5,-3.75);
\end{tikzpicture}
\caption{\label{fig:jeu de taquin}A sketch of how the shape of $(m)^{j+k})/(((m-1)^{j+k-1})$ is transformed step-wise by jeu de taquin resulting in the shape $(m-1|j+k-1)$ in the example of $m=4$ and $j+k=5$. The two different paths in the first step refer to different values chosen in the explicit SSYT.}
\end{center}
\end{figure}

\subsection{The equality of \eqref{eq:Giambelli 1} and \eqref{eq:Giambelli 2}}
\label{sec:giambellis}

We will now show the equivalence of formulae \eqref{eq:Giambelli 1} and \eqref{eq:Giambelli 2}. For this, we will shift the bounds by $-1$ and multiply the latter with three new matrices such that the determinant remains unchanged which will give the former. First, define the following matrices
\begin{align*}
    A&:= \left[
(-1)^{j-i}v^{j+1}\binom{i}{j}+ r u^{i+1} w^{j-i}s_{(i|j+k)}(\x)
 \right]_{0\leq i,j\leq n-1}\\
    B&:= \left[
 \delta_{i,j}v^{i+1}+ r u^{j+1} \sum_{l} \binom{i}{i-l}
 s_{(l|j+k)}(\x)\left(\frac{w}{u}\right)^{i-l}
 \right]_{0\leq i,j\leq n-1}\\
    C_1&:= \left[(-1)^{i+j}\binom{i}{j}\right]_{0\leq i,j\leq n-1} \\
    C_2&:= \left[\delta_{i,j}\left(\frac{u}{w}\right)^i\right]_{0\leq i,j\leq n-1}\\
    C_3&:= \left[\delta_{i,j}\left(\frac{w}{u}\right)^i\right]_{0\leq i,j\leq n-1}
 \end{align*}
where the first two are just expressions \eqref{eq:Giambelli 1} and \eqref{eq:Giambelli 2} after shifting the indices, respectively. We will show the following relationship
\[
A=C_1\cdot C_2 \cdot B\cdot C_3.
\]
Note that this immediately gives the equivalence of formulae \eqref{eq:Giambelli 1} and \eqref{eq:Giambelli 2} since $\det(C_1)=1$ and $\det(C_2)\cdot\det(C_3)=1$.
 So we perform the following manipulations
\begin{align*}
    C_1\cdot C_2 \cdot B \cdot C_3&=  C_1\cdot \left[\left(\frac{u}{w}\right)^{i}\left(\frac{w}{u}\right)^{j}\left(\delta_{i,j}v^{i+1}+ ru^{j+1} \sum_{l=0}^{i} \binom{i}{i-l}
 s_{(l|j+k)}(\x)\left(\frac{w}{u}\right)^{i-l}\right)  \right]_{i,j} \\
 &= C_1\cdot\left[\delta_{i,j}v^{i+1}+ruw^{j} \sum_{l=0}^{i} \binom{i}{i-l}
 s_{(l|j+k)}(\x)\left(\frac{u}{w}\right)^{l}  \right]_{i,j} \\
 &= \left[\sum^{n-2}_{m=0}(-1)^{i+m}\binom{i}{m}\left(\delta_{m,j}v^{m+1}+ruw^{j} \sum_{l=0}^m \binom{m}{m-l}s_{(l|j+k)}(\x)\left(\frac{u}{w}\right)^{l}\right)\right]_{i,j} \\
 &= \left[(-1)^{j-i}v^{j+1}\binom{i}{j}+ruw^{j}\sum_{m=0}^{i}\sum_{l=0}^m(-1)^{i+m}\binom{i}{m}\binom{m}{m-l}s_{(l|j+k)}(\x)\left(\frac{u}{w}\right)^{l}\right]_{i,j}.
\end{align*}
Since the left summand is already of the desired form, it suffices to calculate the coefficient of $s_{(l|j+k)}(\x)$ for a given $l$. We will show that it is $ru^{i+1} w^{j-i}$ when $l=i$ and $0$ otherwise. First, assume that $l=i$. Then the coefficient indeed is 
\[
ruw^{j}\sum_{m=0}^{i}(-1)^{i+m}\binom{i}{m}\binom{m}{m-i}\left(\frac{u}{w}\right)^{i}=
ruw^{j}(-1)^{2i}\binom{i}{i}\binom{i}{0}\left(\frac{u}{w}\right)^{i}
=ru^{i+1} w^{j-i}.
\]
Now fix an $l\neq i$. Then the coefficient of $s_{(l|j+k)}(\x)$ will be $(-1)^{i} ruw^{j}\left(\frac{u}{w}\right)^{l}\sum_{m=l}^{i}(-1)^{m}\binom{i}{m}\binom{m}{l}$ and so we want to show that
\[
\sum_{m=l}^{i}(-1)^{m}\binom{i}{m}\binom{m}{l}=0.
\]
Using $\binom{i}{m} \binom{m}{l} = \binom{i}{l}\binom{i-l}{i-m}$, we can rewrite the left-hand side of the above expression as
\[
\binom{i}{l} (-1)^{i} \sum_{r=0}^{i-l}(-1)^{r} \binom{i-l}{r} = \binom{i}{l} (-1)^{i}  \left(1+(-1)\right)^{i-l},
\]
which evaluates to $0$ since $i \neq l$.

\section{TSPP tableaux}
\label{sec:TSPP tab}

In the final section of this paper, we introduce another combinatorial interpretation for the symmetric generating function for TSPPs and connect it in special cases to the three dual Littlewood identities.\medskip

We call a partition $\la$ \deff{$k$-asymmetric} if its Frobenius notation is of the form $(a_1,\ldots,a_d|a_1+k,\ldots,a_d+k)$ for positive $k$ and $(a_1-k,\ldots,a_d-k|a_1,\ldots,a_d)$ for negative $k$. For an integer $k\geq -1$ and a $k$-asymmetric partition $\la$, a \deff{TSPP tableau} of shape $\la$ and size $(n,k)$ is a semistandard Young tableau of $\la$ in the symbols $1 < 2 < \cdots < n+k < \ov{1} < \ov{2} <\cdots < \ov{n-1}$ such that
\begin{enumerate}
\item[(C1)] the $(i+1)$-st column has entries at most $\ov{i}$,
\item[(C2)] an entry $\ov{i}$ is only allowed within the first $i$ rows, and
\item[(C3)] the bared entries are row strict, meaning, the entry after $\ov{i}$ must be at least $\ov{i+1}$.
\end{enumerate}
\begin{figure}
\begin{center}
\begin{tikzpicture}
\tyoung(0cm,0cm,112,22{\ov{2}},34,45,56)
\Ycyan
\tgyoung(0cm,0cm,:::;,:::;,:::;,4,56)
\draw[very thick] (0,0) -- (1.5,0) -- (1.5,1) -- (1,1) -- (1,2.5) -- (0,2.5) -- (0,0);

\Ywhite
\tyoung(5cm,0cm,12{\ov{1}}{\ov{3}},23{\ov{2}},3)
\Ycyan
\tgyoung(5cm,-.5cm,,1,23:;,3::;)
\draw[very thick] (5,0) -- (7,0) -- (7,.5) -- (6.5,.5) -- (6.5,1) -- (5.5,1) -- (5.5,1.5) -- (5,1.5) -- (5,0);
\end{tikzpicture}
\caption{\label{fig:TSPP tabs} Two TSPP tableaux of size $(4,2)$ on the left and $(4,-1)$ respectively on the right. For both examples, we coloured the cells counted for the column weights in blue.} 
\end{center}
\end{figure} 
For an example, compare to Figure~\ref{fig:TSPP tabs}. 
Denote by $\TSPPT_\la(n,k)$ the set of TSPP tableaux of shape $\la$ and size $(n,k)$.
We define the weight $\omega(T)$ of a TSPP tableau $T$ as the product of the weights of its entries, where an entry $i$ is weighted by $x_i$ and an entry $\ov{i}$ by $\frac{w}{u}$, times the weight of the columns. For the $j$-th column, we calculate the weight as follows where $1 \leq j \leq n$. If the cell $(i,i)$ is part of $\la$, i.e., $i \leq d$, then the weight is equal to $ru^{1+|\{(i,j) \in \la: i > j+k\}|}$, where $(i,j)$ denotes the $j$-th cell in row $i$. For $d<j\leq n$, the weight is equal to $v^{1+|\{(i,j) \notin \la: i<j\}|}$ with the exception of $k=-1$ and $j=d+1$ in which case we weight this column by $ru+v$.

\begin{thm}
\label{thm:TSPP tableaux}
Let $k\geq -1$ and  $\x=(x_1,\ldots,x_{n+k})$ be a finite family of variables. 
The generating function of all TSPP tableaux of size $(n,k)$ is
\begin{equation}
\label{eq:TSPP tableaux}
\A_{n+1,k}(\x;r,u,v,w) = \sum_{\la} \sum_{T \in \TSPPT_\la(n,k)}\omega(T),
\end{equation}
where the sum is over all $k$-asymmetric partitions $\la \subseteq ((n)^{n+k})$.
\end{thm}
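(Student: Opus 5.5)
The plan is to read TSPP tableaux off the families of $e$-paths of size $(n,k)$ from Section~\ref{sec:epaths}. By Theorem~\ref{thm:main} and the weight-preserving bijections of Section~\ref{sec:proof}, $\A_{n+1,k}(\x;r,u,v,w)$ equals the weighted enumeration of non-intersecting families of $e$-paths. It is not the signed determinant that we use here but the positive enumeration, since the signs have already been absorbed as in Remark~\ref{rem:LGV variant}. It therefore suffices to give a weight-preserving bijection between such families and pairs $(\la,T)$ with $\la\subseteq((n)^{n+k})$ $k$-asymmetric and $T\in\TSPPT_\la(n,k)$.

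The bijection is the standard one turning non-intersecting paths into tableaux column by column, refined by the start type of each path.
\begin{itemize}
\item Path $i$ becomes column $i$ of $T$; recall that it ends at $E_{\pi(i)}$ and, by non-intersection, $\pi$ is the identity.
\item If path $i$ starts at $S_i$, it takes exactly $n+k-i$ horizontal steps $(1,0)$ with weights $x_{a+b+1}$. Their labels $a+b+1$ are strictly increasing along the path, because $a+b$ increases with every step. These labels fill the column with unbarred entries from $\{1,\ldots,n+k\}$. The column length is $n+k-i+\ldots$; it should be matched to $\la'_i=a_i+1$ or $b_i+k+1$ via the Frobenius coordinates, so that the path ending at $E_i$ together with the $S_i$ choice forces cell $(i,i)\in\la$, i.e.\ $i\le d$.
\item If path $i$ starts at $S_i'$, it makes some diagonal steps down to $S''_m$. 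Each $(1,-1)$ step, weighted $w/u$, is recorded as a barred entry $\ov{t}$, where $t$ indexes the diagonal level, and the subsequent horizontal steps give unbarred entries. Conditions (C1) and (C2) should be exactly the constraints on the region $a>k$, $a+b\le0$ and on the starting heights $-k-2i+1$. Condition (C3), row strictness of barred entries, should encode non-intersection in the diagonal region, while ordinary semistandardness encodes non-intersection in the $x_j$ region, as in the usual Lindström--Gessel--Viennot proof of Jacobi--Trudi.
\end{itemize}

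Since both sides are already known to be equal as algebraic expressions, an alternative route avoids path geometry entirely. Expand \eqref{eq:Giambelli 1} over subsets $I$ as in Remark~\ref{rem:LGV variant}. Each term indexed by $I$ is then a Giambelli determinant for the shape $\pi_k$ of a TSPP diagonal, multiplied by the $v$-binomial contributions. One then shows that TSPP tableaux of fixed shape $\la$ (with $d$ the Durfee size) split into an ordinary SSYT part, whose generating function is $s_\la(x_1,\ldots,x_{n+k})$, and a barred part. The barred part is counted by the binomials, which explains the weights $ru^{1+\cdots}$ on the first $d$ columns and $v^{1+\cdots}$ on the remaining columns. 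The cyan cells in Figure~\ref{fig:TSPP tabs} are precisely the cells counted by the exponents of $u$ and $v$.

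The hard step is the exact dictionary: which cells carry barred entries and how the cell counts $|\{(i,j)\in\la: i>j+k\}|$ and $|\{(i,j)\notin\la:i<j\}|$ reproduce the exponents $\sum(a_i+1)$ and $\binom n2-\sum(b_i+1)$ in $\omega(T)$. The special weight $ru+v$ for $k=-1$, $j=d+1$ must also be explained. I expect it arises because for $k=-1$ the points $S_i$ and $S'_i$ lead to tableaux with identical columns, so two path configurations map to one tableau. I would first check small cases ($n=1,2$, $k\in\{-1,0,1\}$) to pin down the cell labelling, then verify (C1)–(C3) against the step rules.
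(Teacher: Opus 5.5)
Your overall plan, reading a TSPP tableau column by column off a non-intersecting family of $e$-paths, is the paper's. But the construction as you set it up does not produce tableaux, and every step that actually proves the theorem is left as ``should be''.

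\emph{The column order and the labels are wrong.} Take $(n,k)=(3,0)$ with finite endpoints $E_j=(k+n+1-j,j-1)$, and the following family:
the path from $S_3=(-2,2)$ runs horizontally to $E_3=(1,2)$;
the path from $S_1$ is $(0,0)\to(0,1)\to(2,1)=E_2$;
the path from $S_2'=(1,-3)$ is $(1,-3)\to(2,-2)\to(3,-2)\to(3,0)=E_1$.
Non-intersection does not make $\pi$ the identity. Here $\pi$ swaps $1$ and $2$, as in Figure~\ref{fig:e-path}, and this is why Remark~\ref{rem:LGV variant} needs the sign $\sgn(\sigma_I)$. So ``path $i$ is column $i$'' gives column lengths $2,1,3$. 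Columns must be ordered by position: the $l$-th column from the top is the path ending at $E_{n+1-l}$. Also, a path from $S_i$ to $E_j$ has $n+k+i-j$ horizontal steps, as in \eqref{eq:dual JT}, not $n+k-i$.

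Even with the columns ordered correctly, your labels $a+b+1$ give columns $\{1,2,3\},\{2,3\},\{1\}$, whose bottom row $1,2,1$ is not weakly increasing. The cause is that starting points of adjacent paths can be far apart, whereas the endpoints $(k+l,n-l)$ are consecutive. So a row must collect the $r$-th horizontal steps counted from the \emph{end}. At that rank the upper path lies on a weakly larger diagonal $a+b$, so with your labels the rows weakly decrease.

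The paper therefore reads each path backwards. A $(1,0)$ step that is the $t$-th step from the end gets label $t$. A $(1,-1)$ step that is the $(n+k+t)$-th step from the end gets label $\ov{t+h}$, where $h$ is the number of $(1,0)$ steps on that path. In the example this gives $\{1,2,3\},\{1,2\},\{3\}$. Reading backwards also puts the barred letters at the top of each column, as $n+k<\ov{1}$ requires; read forwards, the $(1,-1)$ steps come first. The price is that $x_i$ is replaced by $x_{n+k+1-i}$. This is harmless only because $\A_{n+1,k}$ is symmetric in $\x$, and that ingredient is missing from your plan.

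\emph{The verifications are the proof.} The step you call ``hard'' is short once the labelling is right.
(C1): in column $l$, a path from $S_i'$ reaching $x+y=0$ at $S_m''$ has $h=l-m$ and exactly $m-1$ diagonal steps, so $t+h\le l-1$.
(C2): an entry $\ov{t+h}$ has only $h$ unbarred and at most $t-1$ barred entries below it, so it lies in row at most $t+h$.
(C3): if columns $c$ and $c+1$ both had $\ov i$ in row $r$, both paths would pass through $(k+c-i+1,\,2r-i-c-k-1)$.

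For the shape, note that in column $l$ a path from $S_i'$ has $l-i$ labelled steps, the same as one starting at $S_i''$, while a path from $S_i$ has $i+k+l-1$. Draw $\la$ in Russian notation. Its first $n$ boundary steps record which $S_i$ are used, its last $n$ record which $S_i'$ are used, and the middle $k$ are forced. So ``exactly one of $S_i,S_i'$'' is precisely $k$-asymmetry. This gives both that the image lies in the right set and that the inverse map is well defined.

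The same counts show that, for $k\ge 0$, column $l$ lies in the Durfee square iff its path starts at some $S_i$. They also turn the start weights into the column weights: $ru^{1+(i+k+l-1)-(l+k)}=ru^i$ and $v^{1+(l-1)-(l-i)}=v^i$. For $k=-1$, only $S_1=S_1'=(0,0)$ coincide, not $S_i$ and $S_i'$ in general. That path always gives column $d+1$ and contributes $ru+v$.

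\emph{The alternative route fails as stated.} Barred letters occupy cells of $\la$. Whenever they occur, the unbarred entries fill a strictly smaller shape, so the generating function of TSPP tableaux of shape $\la$ is not $s_\la(x_1,\ldots,x_{n+k})$ times a barred factor. Matching it with the TSPP expansion would need an identity you have not supplied.
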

\begin{proof}
Using the expansion of $\A_{n+1,k}(\x;r,u,v,w)$ into families of e-paths of size $(n,k)$ described in Section~\ref{sec:epaths}, it suffices to present a weight preserving bijection between families of e-paths and TSPP tableaux, both of size $(n,k)$. By restricting to $n+k$ variables, it follows that all paths have only $(0,1)$-steps above the $x+y=n+k$ diagonal. Instead of infinite paths, we can therefore restrict ourselves to finite paths with end points $E_j=(k+n+1-j,j-1)$ for $1 \leq j \leq n$.
Given a family $\mc{P}$ of such finite e-paths, we construct a TSPP tableau $T$ as follows; compare to Figure~\ref{fig:bij TSPP tab} for an example.
For each path, label the $(1,0)$ and $(1,-1)$ steps along the path backwards according to the following rule. A $(1,0)$ step is labelled $i$ if it is the $i$-th step of the path, and a $(1,-1)$ step is labelled $\ov{i+h}$ if it is the $(n+k+i)$-th step, where $h$ is the number of $(1,0)$ steps on this path. We read the paths from top to bottom and again within each path from right to left and write the appearing labels in consecutive columns of the resulting tableau.
We first show that the described map is a bijection. Next we show that it is weight preserving up to a simple substitution which does not change the generating function itself.\medskip

\begin{figure}
\begin{center}
\begin{tikzpicture}[scale=.4]
\foreach \i in {-2,...,4}{
	\draw[dashed ,gray] (-1*\i,\i) -- (-1*\i,4);
}
\foreach \i in {-7,...,-3}{
		\draw[dashed ,gray] (-1*\i,\i) -- (-1*\i,\i+7);
}
\foreach \i in {0,...,4}{
	\draw[dashed ,gray] (-\i,\i) -- (-\i+7,\i);
}
\foreach \i in {1,...,7}{
	\draw[dashed ,gray] (\i,-\i) -- (7,-\i);
}
\foreach \i in {1,...,5}{
		\draw[dashed ,gray] (3,-1-2*\i) -- (8-\i,-6-\i);
		\draw[dashed ,gray] (3,-1-2*\i) -- (2+\i,-2-\i);
}
\draw[red] (-4,4) -- (7,-7);

\filldraw (0,0) circle (3pt);
\filldraw (-1,1) circle (3pt);
\filldraw (-2,2) circle (3pt);
\filldraw (-3,3) circle (3pt);
\filldraw (-4,4) circle (3pt);

\filldraw (3,-3) circle (3pt);
\filldraw (3,-5) circle (3pt);
\filldraw (3,-7) circle (3pt);
\filldraw (3,-9) circle (3pt);
\filldraw (3,-11) circle (3pt);

\filldraw (3,4) circle (3pt);
\filldraw (4,3) circle (3pt);
\filldraw (5,2) circle (3pt);
\filldraw (6,1) circle (3pt);
\filldraw (7,0) circle (3pt);

\draw[very thick] (-4,4) -- (3,4);
\draw[very thick] (-3,3) -- (4,3);
\draw[very thick] (0,0) -- (2,0) -- (2,1) -- (4,1) -- (4,2) -- (5,2);
\draw[very thick] (3,-5) -- (4,-6) -- (5,-5) -- (5,-3) -- (6,-3) -- (6,1);
\draw[very thick] (3,-7) -- (4,-8) -- (5,-7) -- (6,-6) -- (7,-7) -- (7,0);

\node at (2.5,4.5) {1};
\node at (1.5,4.5) {2};
\node at (.5,4.5) {3};
\node at (-.5,4.5) {4};
\node at (-1.5,4.5) {5};
\node at (-2.5,4.5) {6};
\node at (-3.5,4.5) {7};
\node at (3.5,3.5) {1};
\node at (2.5,3.5) {2};
\node at (1.5,3.5) {3};
\node at (.5,3.5) {4};
\node at (-.5,3.5) {5};
\node at (-1.5,3.5) {6};
\node at (-2.5,3.5) {7};
\node at (4.5,2.5) {1};
\node at (3.5,1.5) {3};
\node at (2.5,1.5) {4};
\node at (1.5,.5) {6};
\node at (.5,.5) {7};
\node at (5.5,-2.5) {5};
\node at (3,-6) {$\ov{3}$};
\node at (6,-7) {$\ov{1}$};
\node at (3,-8) {$\ov{4}$};

\begin{scope}[scale=2.5, xshift=5cm, yshift=-3cm]
\tyoung(0cm,0cm,1115{\ov{1}},223{\ov{3}}{\ov{4}},334,446,557,66,77)
\end{scope}

\begin{scope}[xshift=30cm, yshift=-7cm , xscale=-1, yscale=-1]
\foreach \i in {-2,...,4}{
	\draw[dashed ,gray] (-1*\i,\i) -- (-1*\i,4);
}
\foreach \i in {-7,...,-3}{
		\draw[dashed ,gray] (-1*\i,\i) -- (-1*\i,\i+7);
}
\foreach \i in {0,...,4}{
	\draw[dashed ,gray] (-\i,\i) -- (-\i+7,\i);
}
\foreach \i in {1,...,7}{
	\draw[dashed ,gray] (\i,-\i) -- (7,-\i);
}
\foreach \i in {1,...,4}{
	\draw[dashed ,gray] (3+\i,-3-\i) -- (3+\i,-7-\i);
}

\foreach \i in {4,...,7}{
	\draw[dashed ,gray] (3,-\i) -- (\i,-\i);
}
\foreach \i in {8,...,10}{
	\draw[dashed ,gray] (\i-4,-\i) -- (7,-\i);
}
\draw[red] (-4,4) -- (7,-7);

\filldraw (0,0) circle (3pt);
\filldraw (-1,1) circle (3pt);
\filldraw (-2,2) circle (3pt);
\filldraw (-3,3) circle (3pt);
\filldraw (-4,4) circle (3pt);

\filldraw (4,-6) circle (3pt);
\filldraw (5,-9) circle (3pt);

\filldraw (3,4) circle (3pt);
\filldraw (4,3) circle (3pt);
\filldraw (5,2) circle (3pt);
\filldraw (6,1) circle (3pt);
\filldraw (7,0) circle (3pt);

\draw[very thick] (-4,4) -- (3,4);
\draw[very thick] (-3,3) -- (4,3);
\draw[very thick] (0,0) -- (2,0) -- (2,1) -- (4,1) -- (4,2) -- (5,2);
\draw[very thick] (4,-6) -- (5,-6) -- (5,-5) -- (5,-3) -- (6,-3) -- (6,1);
\draw[very thick] (5,-9) -- (6,-9) -- (6,-8) -- (7,-8) -- (7,-7) -- (7,0);

\node at (2.5,3.5) {1};
\node at (1.5,3.5) {2};
\node at (.5,3.5) {3};
\node at (-.5,3.5) {4};
\node at (-1.5,3.5) {5};
\node at (-2.5,3.5) {6};
\node at (-3.5,3.5) {7};
\node at (3.5,2.5) {1};
\node at (2.5,2.5) {2};
\node at (1.5,2.5) {3};
\node at (.5,2.5) {4};
\node at (-.5,2.5) {5};
\node at (-1.5,2.5) {6};
\node at (-2.5,2.5) {7};
\node at (4.5,1.5) {1};
\node at (3.5,.5) {3};
\node at (2.5,.5) {4};
\node at (1.5,-.5) {6};
\node at (.5,-.5) {7};
\node at (5.5,-3.5) {5};
\node at (4.5,-6.5) {$\ov{2}$};
\node at (6.5,-8.5) {$\ov{2}$};
\node at (5.5,-9.5) {$\ov{4}$};
\end{scope}
\end{tikzpicture}
\caption{\label{fig:bij TSPP tab} A family of e-paths of size $(5,2)$ (left), its corresponding TSPP tableau (middle) and the family of e-paths rotated by $180$ degree where the steps originally below the diagonal are ``straightened''.}
\end{center}
\end{figure} 
By rotating $\mc{P}$ by $180$ degrees and ``straightening'' the former $(1,1)$ and $(1,-1)$ steps to $(0,1)$ and $(1,0)$ steps, respectively, as shown in the right of Figure~\ref{fig:bij TSPP tab}, it is immediate that $T$ is an SSYT in the claimed alphabet. Next we check the three conditions for the bared entries are satisfied.
\begin{enumerate}
\item[ad (C1).] By comparing the starting and ending points, we see that the $i$-th path from the top (in the original picture) has at most $n+k+i-1$ steps. If it includes a $(1,-1)$ step, it can have at most $i-1$ many $(1,0)$ and $(1,-1)$ steps. This implies that the highest label is at most $\ov{i-1}$.
\item[ad (C2).]  By definition, an entry $\ov{i}$ in a path is at most the $i$-th labelled entry, i.e., at most in the $i$-th row of $T$.
\item[ad (C3).]  Finally we need to check that bared entries are row strict.
An entry in the $r$-th row and $c$-th column is equal to $\ov{i}$ if it is the $(n+k-h+i)$-th step of the $c$-th path from top. The first $(n+k-h+i)$ steps consist therefore out of $h$ times a $(1,0)$ step, $(n+k-h)$ times the $(0,1)$ step, $r-h$ times a $(1,-1)$ step and $i-r$ times a $(1,1)$ step. Since the path ''starts'' (when going backwards) at $(k+c,n-c)$, its coordinate before the step labelled with $\ov{i}$ is $(k+c-i+1,2r-i-c-k-1)$ and thereafter $(k+c-i,2r-i-c-k)$. Hence the $c$-th and $(c+1)$-st path (counted from top) would cross if they both had the entry $\ov{i}$ in the $r$-th row which is a contradiction.
\end{enumerate}
Next we check that the shape $\la$ of $T$ is $k$-asymmetric.
 Denote by $I$ the index set of starting points $S_i^\prime$ used in $\mc{P}$. First note that the number of labelled steps of a path starting at $S_i^\prime$ and ending at $E_j$ is equal to the number of labelled steps of a path starting at $S_i^{\prime\prime}$ instead, i.e., in order to calculate the number of cells in the $(n+1-j)$-th column of $\la$ we can assume that paths start at $S_i^{\prime\prime}$. Next we rotate the Young diagram $\la$ by 45 degrees counterclockwise and thereby obtain a variation of the Russian notation as shown in Figure~\ref{fig:shape of the tab}. The shape of $\la$ is now defined through a path of $(1,1)$ and $(1,-1)$ steps. We mark each $(1,1)$ step by a square and each $(1,-1)$ by a circle and note the sequence of symbols on a line. By labelling the symbol on the left of the bottom corner of $\la$ by $k$, the square symbols are labelled by the $x$-coordinate of the $S_i$ for $i \in [1,n]\setminus I$ and $S_i^{\prime\prime}$ for $i \in I$.
 To be more precise, for $1 \leq i \leq n$, the $i$-th symbol from the left is a square if and only if $S_{n+1-i}$ is a starting point of $\mc{P}$, i.e., $n+1-i \notin I$, and the $i$-th symbol from the right is a square if and only if $S_{n+1-i}^\prime$ is a starting point of $\mc{P}$, i.e., $n+1-i\in I$.
This implies that the last $n$ steps (on the right) of the boundary of $\la$ are obtained by reflecting the first $n$ steps. The remaining $k$ steps are by definition always marked by a circle, implying the assertion on the shape.\\
\begin{figure}
\begin{center}
\begin{tikzpicture}[scale=0.5]
\draw (-7,7) -- (0,0);
\draw (-6,8) -- (1,1);
\draw (-3,7) -- (2,2);
\draw (1,5) -- (3,3);
\draw (2,6) -- (4,4);
\draw (0,0) -- (5,5);
\draw (-1,1) -- (4,6);
\draw (-2,2) -- (1,5);
\draw (-3,3) -- (0,6);
\draw (-4,4) -- (-1,7);
\draw (-5,5) -- (-3,7);
\draw (-6,6) -- (-4,8);
\draw[very thick] (-7,7) -- (-5,9) -- (-3,7) -- (-2,8) -- (1,5) -- (3,7) -- (5,5);

\filldraw[green!50!black] (-6.5,7.5) \Square {.15};
\filldraw[green!50!black] (-5.5,8.5) \Square {.15};
\filldraw[black] (-4.5,8.5) circle (.15);
\filldraw[black] (-3.5,7.5) circle (.15);
\filldraw[green!50!black] (-2.5,7.5) \Square {.15};
\filldraw[black] (-1.5,7.5) circle (.15);
\filldraw[black] (-.5,6.5) circle (.15);
\filldraw[black] (.5,5.5) circle (.15);
\filldraw[green!50!black] (1.5,5.5) \Square {.15};
\filldraw[green!50!black] (2.5,6.5) \Square {.15};
\filldraw[black] (3.5,6.5) circle (.15);
\filldraw[black] (4.5,5.5) circle (.15);

\node at (-7,8) {$S_5$};
\node at (-6,9) {$S_4$};
\node at (-3,8) {$S_1$};
\node at (1,6) {$S_2^{\prime\prime}$};
\node at (2,7) {$S_3^{\prime\prime}$};

\draw (-7,-.5) -- (5,-.5);
\filldraw[green!50!black] (-6.5,-.5) \Square {.15};
\filldraw[green!50!black] (-5.5,-.5) \Square {.15};
\filldraw[black] (-4.5,-.5) circle (.15);
\filldraw[black] (-3.5,-.5) circle (.15);
\filldraw[green!50!black] (-2.5,-.5) \Square {.15};
\filldraw[black] (-1.5,-.5) circle (.15);
\filldraw[black] (-.5,-.5) circle (.15);
\filldraw[black] (.5,-.5) circle (.15);
\filldraw[green!50!black] (-6.5,7.5) (1.5,-.5) \Square {.15};
\filldraw[green!50!black] (-6.5,7.5) (2.5,-.5) \Square {.15};
\filldraw[black] (3.5,-.5) circle (.15);
\filldraw[black] (4.5,-.5) circle (.15);

\foreach \i in {-4,...,7}
		\node at (\i-2.5,-1.25) {\i};
\end{tikzpicture}
\caption{\label{fig:shape of the tab} A variation of the Russian notation for the partition $\la$ of Figure~\ref{fig:TSPP tabs} where the steps of the contour are marked with squares and circles respectively.
}
\end{center}
\end{figure}

Since all steps are immediately reversible, we need to show that starting with a TSPP tableau we actually obtain family of e-paths. It is immediate that by reversing the steps, we obtain a family of non-crossing paths which uses the appropriate steps in each region. The above argument on the shape of the tableau implies that if a path starts at $S_i$, no path can start at $S_i^\prime$.\\

Finally we have to check that the bijection is weight preserving up to a twist as explained next. 
By definition, it preserves the $\x$ weight up to the substitution $x_i \mapsto x_{n+k+1-i}$ for all $1 \leq i \leq n+k$. Since the generating function of all e-paths of size $(n,k)$, which is $\A_{n+1,k}$, is invariant under this substitution, we can ignore it for our purpose.
It is immediate that the weight of the $(1,-1)$ steps matches that of bared entries, hence we have to show that the weight coming from the choice of starting positions $S_i$ or $S_i^\prime$ of $\mc{P}$ respectively match the weight of the columns of $T$.
If the $l$-th path of $\mc{P}$ (counted from the top) starts at $S_i$, it has $(i+k+l-1)$ weighted steps and is weighted itself by $ru^i$. As this path corresponds to the $l$-th column of $T$, this column has $(i+k+l-1)$ cells and is therefore by definition weighted by $r u^{1+(i+k+l-1)-(l+k)}=r u^{i}$. On the other hand, if the $l$-th path of $\mc{P}$ starts at $S_i^\prime$ (again counted from the top), it has $(l-i)$ weighted steps and its weight is $v^i$. The corresponding $l$-th column of $T$ has by definition the weight $v^{1+(l-1)-(l-i)}=v^{i}$. Finally we have to deal with the case $k=-1$ and $l=d+1$ (remember that $d$ is the length of the Durfee square of $\la$). In this case, the $l$-th column corresponds to the path starting at $S_1=S_1^\prime$ (both points now have the same coordinates). Instead of having two different starting points with weights $ru$ and $v$, we think of it as one point with weight $(ru+v)$ which completes the proof.
\end{proof}

In the proof above, we obtained a bijection from TSPP tableaux to e-paths by decomposing the tableaux along columns. It is not difficult to see that one obtains analogous bijections to h-paths or mixed paths by decomposing the TSPP tableaux along rows or hooks respectively.\medskip

Remarkably, Theorem~\ref{thm:TSPP tableaux} leads naturally to an interesting relation between the symmetric polynomial $\A_{n,k}$ and the three dual Littlewood identities\footnote{Compare for example to \cite[Ch. I §5, Ex 9, p.79]{Macdonald95}.} which we prove next
\begin{align}
\label{eq:n,1 case}
\A_{n,1}(x_1,\ldots,x_{n};1,1,1,0) &= \sum_{\la: 1\text{-asym}} s_\la(x_1,\ldots,x_n) = \prod_{1\leq i < j \leq n} (1+x_ix_j), \\
\nonumber \A_{n+1,0}(x_1,\ldots,x_{n};1,-1,1,0) &= \sum_{\la: 0\text{-asym}} 
(-1)^{(|\la|+d(\la))/2} s_\la(x_1,\ldots,x_n) \\
\label{eq:n,0 case}
&\phantom{= \sum_{\la: -1\text{-asym}} s_\la(x_1,\ldots,x_n)}
= \prod_{i=1}^n (1-x_i)\prod_{1\leq i < j \leq n} (1-x_ix_j), \\
\label{eq:n,-1 case}
\frac{1}{2}\A_{n+2,-1}(x_1,\ldots,x_{n};1,1,1,0) &= \sum_{\la: -1\text{-asym}} s_\la(x_1,\ldots,x_n) = \prod_{i=1}^n (1+x_i^2)\prod_{1\leq i < j \leq n} (1+x_ix_j),
\end{align}
where $d(\la)$ denotes the size of the Durfee square of $\la$ and we use $k\text{-asym}$ as the abbreviation for $k$-asymmetric. 

First note that for $w=0$, we can not have any bared entries and that $\TSPPT_\la(n,k)$ is therefore equal to the set of all SSYT of shape $\la$ and entries at most $n+k$. This implies immediatly \eqref{eq:n,1 case} since the weight of each TSPP tableau $T$ is $\x^T$ when regarded as an SSYT, and \eqref{eq:n,-1 case} since the weight of each TSSPP tableau $T$ is equal to $2 \x^T$ when regarded as an SSYT. For the last equation \eqref{eq:n,0 case}, the power of $u=(-1)$ counts the number of cells of $\la$ weakly above the main diagonal, which is equal to $(|\la|+d(\la))/2$ and hence proves the claim.

It is not difficult to see that setting $u=-1$ instead of $u=1$ in \eqref{eq:n,1 case} and \eqref{eq:n,-1 case} yields the more standard version of the dual Littlewood identities which have $\prod_{1\leq i < j \leq n} (1-x_ix_j)$ and $\prod_{1\leq i \leq j \leq n} (1-x_ix_j)$ respectively on their right-hand side.\\

Finally, the above suggests to study $\A_{n,k}$ also for negative values of $k$. At least from an enumerative point of view, the ``only'' new interesting case is $k=-1$ for which we observe the following evaluation at $\x=(x_1,\ldots,x_{n-1})=(1,\ldots,1)$, given by
\begin{multline} 
\A_{n,-1}(1,\ldots,1;1,1,1,-1) = 2^{\left\lfloor \frac{n+1}{2} \right\rfloor\left\lfloor \frac{n+2}{2} \right\rfloor-\left\lfloor \frac{n}{2} \right\rfloor} \prod_{i=1}^{\left\lfloor \frac{n+1}{2} \right\rfloor} \frac{(i-1)!}{(n-i)!} \\ \times
\prod_{i \geq 0} 
(3i+1)_{\left\lfloor \frac{n-4i-1}{2} \right\rfloor}
(3i+1)_{\left\lfloor \frac{n-4i-2}{2} \right\rfloor}
\left(2\left\lfloor\frac{n}{2}\right\rfloor -i-\frac{1}{2}\right)_{\left\lfloor \frac{n-4i}{2} \right\rfloor}
\left(2\left\lfloor\frac{n-1}{2}\right\rfloor -i+\frac{1}{2}\right)_{\left\lfloor \frac{n-4i-3}{2} \right\rfloor},
\end{multline}
where we use the Pochhammer symbol defined as $(x)_n:=x(x+1)\cdots (x+n-1)$ for non-negative $n$ and $(x)_n=1$ otherwise. This observation follows from the fact that $\A_{n,k}(x_1,\ldots,x_{n+k-1};1,1,1,-1)$ evaluated at $x_1=\cdots=x_{n+k-1}=1$ is a polynomial in $k$, compare for example to \cite[Equation (21)]{AignerFischer24}. As a consequence of \cite[Theorem 1.2, (3)]{AignerFischer24} this polynomial counts for $d=2k \geq 0$ the number of cyclically symmetric lozenge tilings of a cored hexagon with side lengths $n,n+d,n,n+d,n,n+d$ and is therefore equal to the enumeration of $(n,d)$-AS trapezoids, compare to \cite[Theorem 2.11]{Aigner19}, \cite{Fischer19b}. By setting $k=-1$ we therefore obtain the above formula.

Note, while we presented several combinatorial interpretations for $\A_{n,-1}(\x;1,1,1,-1)$, it would be an interesting question to explore what the corresponding lozenge tiling object would be, i.e., what it means to have a triangular hole of side length $-2$.

\bibliographystyle{abbrvurl}
\bibliography{DetFormulae.bib}
\end{document}